\crefname{hypothesis}{Hypothesis}{Hypotheses}
\newtheorem{Proposition}{Proposition}
\title{A Successive Two-stage Method for Sparse Generalized Eigenvalue Problems}
\author{Qia Li\thanks{School of Computer Science and Engineering, Guangdong Province Key Laboratory of Computational Science, Sun Yat-sen University, Guangzhou, Guangdong Province 510275, P.R.China (liqia@mail.sysu.edu.cn).} \and Jianmin Liao\thanks{Department of Mathematics, Syracuse University, Syracuse, NY 13244, USA (jliao21@syr.edu).} \and Lixin Shen\thanks{Department of Mathematics, Syracuse University, Syracuse, NY 13244, USA (lshen03@syr.edu).} \and Na Zhang\thanks{Corresponding author, Department of Applied Mathematics, College of Mathematics and Informatics, South China Agricultural University, Guangzhou, Guangdong Province 510642, P.R.China (nzhsysu@gmail.com).}
}
\begin{document}
\maketitle
\begin{abstract}
The Sparse Generalized Eigenvalue Problem (sGEP), a pervasive challenge in statistical learning methods including sparse principal component analysis, sparse Fisher's discriminant analysis, and sparse canonical correlation analysis, presents significant computational complexity due to its NP-hardness. The primary aim of sGEP is to derive a sparse vector approximation of the largest generalized eigenvector, effectively posing this as a sparse optimization problem. Conventional algorithms for sGEP, however, often succumb to local optima and exhibit significant dependency on initial points. This predicament necessitates a more refined approach to avoid local optima and achieve an improved solution in terms of sGEP's objective value, which we address in this paper through a novel successive two-stage method. The first stage of this method incorporates an algorithm for sGEP capable of yielding a stationary point from any initial point. The subsequent stage refines this stationary point by adjusting its support, resulting in a point with an enhanced objective value relative to the original stationary point. This support adjustment is achieved through a novel procedure we have named \textit{support alteration}. The final point derived from the second stage then serves as the initial point for the algorithm in the first stage, creating a cyclical process that continues until a predetermined stopping criterion is satisfied. We also provide a comprehensive convergence analysis of this process. Through extensive experimentation under various settings, our method has demonstrated significant improvements in the objective value of sGEP compared to existing methodologies, underscoring its potential as a valuable tool in statistical learning and optimization.

\end{abstract}

\begin{keywords}
	Sparse Principal Component Analysis; Sparse Generalized Eigenvectors; Fractional Programming; Support Alteration; Sparsity
\end{keywords}

\begin{AMS}
	90C26, 90C32, 90C59, 90C90
\end{AMS}

\section{Introduction}
\label{Intro}

This section reviews the concept of the sparse generalized eigenvalue problem (sGEP) and its significance in statistical learning. 


Given a matrix pair $(\mathbf{A},\mathbf{B}) \in \mathbb{S}_+^{n} \times \mathbb{S}_{++}^{n}$ (i.e., $\mathbf{A}$ is positive semi-definite and $\mathbf{B}$ is positive definite), the task of the associated generalized eigenvalue problem (GEP) is to find a pair $(\lambda,\mathbf{x}) \in \mathbb{R} \times (\mathbb{R}^n \setminus \{\mathbf{0}\})$ that satisfies $\mathbf{A} \mathbf{x}=\lambda \mathbf{B} \mathbf{x}$,  
where $\lambda$ is referred to as a generalized eigenvalue, and $\mathbf{x}$ is its associated generalized eigenvector. The generalized eigenvectors associated with the largest absolute value of generalized eigenvalues are termed as the leading generalized eigenvectors of the matrix pair ($\mathbf{A},\mathbf{B}$). 
The generalized Rayleigh quotient of a matrix pair $(\mathbf{A},\mathbf{B}) \in \mathbb{S}_+^{n} \times \mathbb{S}_{++}^{n}$ at $\mathbf{x} \neq \mathbf{0}$ is 
\begin{equation}
	R(\mathbf{x}; \mathbf{A}, \mathbf{B}):=\frac{\mathbf{\mathbf{x}}^{\top} \mathbf{A} \mathbf{\mathbf{x}}}{\mathbf{\mathbf{x}}^{\top} \mathbf{B} \mathbf{\mathbf{x}}}. \label{RayleighQ}
\end{equation}
According to the Rayleigh-Ritz theorem \cite{parlett1998symmetric}, the leading generalized eigenvector of the matrix pair ($\mathbf{A},\mathbf{B}$) is the optimal solution to the following optimization problem:
\begin{equation}
	\max \{R(\mathbf{\mathbf{x}}; \mathbf{A}, \mathbf{B}): \mathbf{\mathbf{x}} \in \mathbb{R}^n, \mathbf{x} \neq \mathbf{0}\}. \label{prob:GEP}
\end{equation}
Generalized eigenvectors of the remaining eigenvalues can be obtained via a process known as deflation \cite{cao1987deflation}, which involves solving an optimization problem similar to \eqref{prob:GEP}.



The GEP has been an active area of research for many decades, leading to the development of numerous algorithms \cite{davies2001analysis,kalantzis2020domain,lang2019efficient,meerbergen2015sylvester} to solve it. However, despite the simplicity and popularity of the GEP, a particular disadvantage is that the generalized eigenvectors typically have few zero entries, which makes the result hard to interpret \cite{song2015sparse}. The sGEP \cite{sriperumbudur2011majorization} overcomes this disadvantage by finding linear combinations that contain just a few input variables. Formally, given a matrix pair $(\mathbf{A},\mathbf{B}) \in \mathbb{S}_+^{n} \times \mathbb{S}_{++}^{n}$ and a sparsity level $s$ between $1$ and $n$, the sGEP corresponds to the following problem:
\begin{equation}
	\max \{R(\mathbf{x};\mathbf{A},\mathbf{B}): \mathbf{x}\in \mathbb{R}^n, \mathbf{x} \neq \mathbf{0}, \|\mathbf{x}\|_0 \le s\}, \label{sGEP}
\end{equation}
where $R(\mathbf{x};\mathbf{A},\mathbf{B})$ is given by \eqref{RayleighQ} and $\|\mathbf{x}\|_0$ denotes the $\ell_0$ ``norm'' of $\mathbf{x}$, which is the number of nonzero entries in $\mathbf{x}$. 

Many statistical learning models can be viewed as instances of sGEP with proper matrices $\mathbf{A}$ and $\mathbf{B}$. We list several typical examples below which will be revisited later in the section of numerical experiments. 





\textbf{Example 1: Sparse Principal Component Analysis (sPCA)} The goal of principal component analysis (PCA) is to find a projection direction that maximizes the variance of the data after being projected along this direction. 
Let $\mathbf{A}$ represent the sample covariance matrix of observations, and let $\mathbf{x}$ indicate the direction of a one-dimensional space onto which the observations will be projected. The variance of the observations projected onto $\mathbf{x}$ can be calculated as $\mathbf{x}^{\top} \mathbf{A} \mathbf{x}/\mathbf{x}^{\top}\mathbf{x}$ and the largest variance is actually the optimal value of \eqref{prob:GEP} with $\mathbf{B}=\mathbf{I}$. However, the corresponding optimal solution is typically non-sparse.  To address this issue, an approach, known as sPCA \cite{zou2006sparse}, adds a sparsity constraint to the optimization problem of PCA. Clearly, the sPCA is an instance of \eqref{sGEP} with $\mathbf{A}$ being the current $\mathbf{A}$ and $\mathbf{B}=\mathbf{I}$. The sPCA has been extensively explored in applications including big data \cite{yang2015streaming}, dimensionality reduction \cite{shi2020supervised}, manifold learning \cite{tan2019learning}, gene selection \cite{feng2019supervised} and distributed system \cite{ge2018minimax}.  





\textbf{Example 2: Sparse Fisher's Discriminant Analysis (sFDA)} Fisher's discriminant analysis (FDA) aims to determine a linear combination of features that best separates observations into two classes.   
For an optimal separation between classes, FDA aims to identify a linear combination that simultaneously maximizes between-class variance and minimizes within-class variance. FDA is designed to accomplish this by maximizing the ratio of between-class variance to within-class variance. However, the resulting linear combination in FDA typically includes all features, complicating the interpretation and utilization of the results. To address this issue, sFDA \cite{Clemmensen-Hastie-Witten:Technometrics2011} introduces a sparsity constraint on the coefficient vector, thereby limiting the number of features explicitly involved. Suppose we have two classes of observations with mean vectors $\bm{\mu}_1$, $\bm{\mu}_2$, and covariance matrices $\bm\Sigma_1$, $\bm\Sigma_2$, respectively. The formulation of sFDA is \eqref{sGEP} with 
$\mathbf{A}=(\bm{\mu}_1-\bm{\mu}_2)(\bm{\mu}_1-\bm{\mu}_2)^{\top}$ and $\mathbf{B}=\bm\Sigma_1+\bm\Sigma_2$.




\textbf{Example 3: Sparse Canonical Correlation Analysis (sCCA)}
Let $X=(X_1,\dots,X_{n_1})$ and $Y=(Y_1,\dots,Y_{n_2})$ be two random vectors from two different classes. For $\mathbf{u} \in \mathbb{R}^{n_1}$ and $\mathbf{v} \in \mathbb{R}^{n_2}$,  $U=\mathbf{u}^\top X$ is the linear combination of $X$ with the coefficient vector $\mathbf{u}$ while $V=\mathbf{v}^\top Y$ is the linear combination of $Y$ with the coefficient vector $\mathbf{v}$. The goal of canonical correlation analysis (CCA) is to seek $\mathbf{u}$ and $\mathbf{v}$ that maximize the correlation between $U$ and $V$. The corresponding CCA problem is equivalent to the following one:
\begin{equation*}\label{prob:cca1}
	\max_{\mathbf{u} \in \mathbb{R}^{n_1} , \ \mathbf{v} \in \mathbb{R}^{n_2}} \{\mathbf{u}^{\top} \bm\Sigma_{XY} \mathbf{v}: \mathbf{u}^{\top} \bm\Sigma_X \mathbf{u} =\mathbf{v}^{\top} \bm\Sigma_Y \mathbf{v}= 1\}, 
\end{equation*}
which can be further formulated as an instance of GEP \eqref{prob:GEP} with (see \cite{lee2007two})
\[
\mathbf{x}=\begin{pmatrix} \mathbf{u} \\ \mathbf{v} \end{pmatrix}, \quad \mathbf{A}=\begin{pmatrix}  \bm\Sigma_X & \bm\Sigma_{XY} \\ \bm\Sigma_{XY}^\top & \bm\Sigma_Y \end{pmatrix}, \quad \mbox{and} \quad \mathbf{B}=\begin{pmatrix} \bm\Sigma_X & \mathbf{0} \\ \mathbf{0} & \bm\Sigma_Y \end{pmatrix}, 
\]
where $\bm\Sigma_X$ and $\bm\Sigma_Y$ are the covariance matrices for $X$ and $Y$, respectively, and $\bm\Sigma_{XY}$ is the cross-covariance matrix between $X$ and $Y$. To improve interpretability, we can put sparsity constraint on the GEP formulation of CCA. The resulting optimization problem is an instance of \eqref{sGEP} with the aforementioned $\mathbf{x},\mathbf{A},\mathbf{B}$.

\textbf{Example 4: Sparse Sliced Inverse Regression (sSIR)}
Sliced inverse regression (SIR) \cite{li1991sliced} is particularly useful in high-dimensional regression analysis. Given a univariate response variable $Y$ and $n$-dimensional explanatory variables $X$, the SIR seeks $k$ linear combinations $(\mathbf{u}_1,\dots,\mathbf{u}_k)$ of $X$, $k\le n$, such that $Y$ only depends on $X$ through a $k$ dimensional subspace via a $k$ dimensional random vector $(\mathbf{u}_1^{\top}X, \dots, \mathbf{u}_k^{\top}X)$. Under some conditions \cite{li2006sparse}, the SIR is connected to $\bm\Sigma_{E(X|Y)} \mathbf{x}_j=\lambda_j \bm\Sigma_X \mathbf{x}_j$, 
where $\bm\Sigma_X$ ($\bm\Sigma_{E(X|Y)}$) is the covariance matrix for $X$ (the conditional expectation $E(X|Y)$), and $\mathbf{x}_1,\dots,\mathbf{x}_n$ are generalized eigenvectors corresponding to eigenvalues $\lambda_1\ge\dots\ge\lambda_n$, respectively. The sparse SIR \cite{li2006sparse} as a variant of the SIR was formulated as an instance of \eqref{sGEP} with $\mathbf{A}=\bm\Sigma_{E(X|Y)}$ and  $\mathbf{B}=\bm\Sigma_{X}$.

\subsection{A Brief Review of Existing Work on sGEP}\label{sec:review}




Algorithms to solve the sparse Principal Component Analysis (sPCA), an instance of sGEP illustrated in Example 1, have been extensively studied and evaluated. A basic approach to sPCA involves setting the entries of the eigenvectors of a covariance matrix to zero if their absolute values are below a certain threshold. However, this simplistic method has been proven potentially misleading in numerous aspects \cite{cadima1995loading}. Contrastingly, algorithms based on LASSO \cite{jolliffe2003modified,zou2006sparse} have shown potential in producing higher quality solutions than the simple thresholding method. Many existing methods for sPCA, as detailed in various studies \cite{journee2010generalized,luss2013conditional,shen2008sparse,sigg2008expectation,sriperumbudur2011majorization,witten2009penalized}, can be categorized as conditional gradient methods, also known as the Frank-Wolfe method \cite{frank1956algorithm}. Among these, the Truncated Power Method (TPM), independently proposed in several studies \cite{luss2013conditional,yuan2013truncated}, guarantees convergence to a stationary point of the sPCA. Compared to LASSO-based algorithms, TPM has been found to produce higher quality solutions with significantly less computational time. In high dimensional scenarios, the branch-and-bound method \cite{berk2019certifiably,moghaddam2005spectral} has shown its effectiveness in producing optimal solutions within acceptable time frames, albeit primarily in low sparsity settings. 

The introduction of an effective greedy strategy using branch-and-bound search led to further improvements \cite{moghaddam2005spectral}. PathSPCA \cite{d2008optimal} enhanced this greedy method using a different criterion. Other strategies like the coordinate-wise algorithm \cite{beck2013sparsity,beck2016sparse}, including its variants—Greedy Coordinate-Wise algorithm (GCW) and Partial Coordinate-Wise algorithm (PCW)—utilize methods such as the greedy method or PathSPCA to obtain an initial sparse solution, which is then improved by swapping one entry pair at a time, thereby enhancing the solution quality. Furthermore, several other methodologies have been used to tackle sPCA. These include sparse linear regression solver \cite{bresler2018sparse}, semi-definite programming \cite{d2005direct}, convex relaxation \cite{zhang2012sparse}, and a second-order method \cite{kuleshov2013fast}. Each of these presents unique approaches to optimizing the problem, demonstrating the breadth of methodologies available to tackle sPCA.

While a multitude of algorithms have been developed to tackle sPCA, their generalization to the sGEP case has proven challenging, resulting in fewer proposed algorithms for sGEP than for sPCA. Existing algorithms for sGEP can be broadly divided into two groups based on their approach to problem formulation. The first group addresses the penalized formulation of sGEP. Algorithms in this group, as mentioned in multiple studies \cite{song2015sparse,sriperumbudur2011majorization}, approximate the $\ell_0$ norm using surrogate functions. The resultant optimization problems are then tackled using minorization-maximization algorithms.  The second group of algorithms focuses directly on the original formulation \eqref{sGEP}. Notable among these are gradient-based algorithms, including the truncated Rayleigh flow \cite{tan2018sparse} and proximity-gradient subgradient algorithm (PGSA) with line search \cite{zhang2022first}. These algorithms were validated to produce sequences of iterations that converge to stationary points of the original formulation \cite{zhang2022first}.  Other notable methodologies include a decomposition algorithm \cite{yuan2019decomposition}, which offers high-quality solutions but is significantly more computationally intensive. A different approach incorporates a novel truncation procedure along with an eigensolver \cite{cai2020inverse} to effectively find the support set of the leading eigenvector.

Overall, while there is a range of methods to tackle sGEP, each approach has its unique challenges and advantages, emphasizing the complex and multifaceted nature of the sGEP.



\subsection{Motivations}\label{sec summary}

While the two groups of algorithms represent substantial progress towards solving sGEP, they still pose certain challenges that need to be addressed. In the case of the first group of algorithms that work with the penalized formulation of sGEP, the exact correspondence between the penalized parameter in the formulation and the sparsity of the solution is unknown prior to implementation. Additionally, setting a clear threshold to convert the approximate sparse output from the penalized formulation into genuinely sparse solutions in practical applications can be quite intricate. As for the second group of algorithms, gradient-based methods are often susceptible to becoming ensnared in local optima and exhibit a high degree of dependency on initial points. Moreover, other methods, while comprehensive, are typically time-consuming and yield solutions that offer limited improvement.

In light of these challenges, in this paper, we focus directly on the constrained formulation, aiming to derive an enhanced solution within an acceptable timeframe. Our work intends to strike a balance between efficiency and efficacy, striving to provide a viable solution for sGEP that addresses the current shortcomings in existing methods.

Coordinate-wise algorithms for sPCA offer a compelling template for achieving improved solution quality within an acceptable timeframe. The crux of these algorithms is their two-stage process. In the first stage, an initial point is derived. Following this, the second stage progressively modifies this initial point by swapping an entry pair at a time, resulting in the enhancement of the solution's quality. The strength of these algorithms lies in their ability to improve the quality of solutions without significant time consumption, as demonstrated in various experimental settings. These successful implementations serve as motivation for us to develop a new algorithm for sGEP that can successively enhance the iterates generated from the algorithm and provide insights for improving each iterate by modifying its support.

Our goal, therefore, is to apply the lessons learned from the coordinate-wise algorithms to our sGEP challenge, focusing on an iterative, support modification approach that strives for solution quality improvement within an acceptable timeframe.

\subsection{Outline}
We devise \textit{support alteration}, a procedure that targets to improve the quality of an input through adjusting its support, and outputs a potentially better initial point in terms of the objective value of the sGEP. Based on the support alteration, we devise a successive two-stage framework for sGEP, which employs an algorithm to obtain an initial point in the first stage, and employs the support alteration to obtain a support-altered solution in the second stage. The resulting solution from the second stage will serve as the initial point of the algorithm in the first stage. This process will be repeated until some stopping criteria are met. This algorithm provides a significant numerical improvement compared with the existing algorithms, as demonstrated in Section \ref{sec experiment}.



In Section~\ref{sec:existence}, we first demonstrate the existence of an optimal solution to the sGEP. We then discuss the NP-hardness of the sGEP.

In Section \ref{sec framework}, we propose a successive two-stage framework with a pre-given step size for support alteration. Our framework consists of two stages in each iteration. In the first stage of the framework, it performs a gradient-based framework for the sGEP. Its output is further refined through a support alteration in the second stage. Our framework performs these two stages successively until a predetermined stopping criterion is met. In particular, we provide a closed-form solution for a key one-dimensional optimization problem arsing in the second stage.  

In Section \ref{sec pair number}, our primary focus is on self-adaptively determining the number of swapped pairs, represented by $r$. Initially, we discuss that $r$ can be viewed as a step size in support alterations when $\ell_0$ norm is used as the metric. Drawing inspiration from the backtracking technique frequently used in numerical algorithms, we propose a self-adaptive strategy for determining $r$ in each iteration. Subsequently, incorporating this self-adaptive strategy and the diminishing strategy for choosing step size, we introduce a holistic successive two-stage algorithm with self-adaptive number of swapped pairs for the sGEP. Additionally, this section provides a theorem on the convergence properties of our algorithm and concludes with a comparative analysis against coordinate-wise algorithms.


In Section \ref{sec experiment}, comprehensive experiments on several sGEP instances are conducted. The experiments demonstrate that our successive two-stage algorithm provides best solutions on average when compared to state-of-the-art methods. The conclusion of this paper is drawn in Section~\ref{sec:conclusions}.






\section{Existence and NP-Hardness} \label{sec:existence}
In this section, we study the existence of optimal solutions to sGEP and the NP-hardness of the problem. Before that, we first present our preferred notations used in this paper.   
Throughout this paper, $\mathbb{R}$, $\mathbb{R}^{n}$, and $\mathbb{N}$ denote the sets of the real numbers, $n$-dimensional real vectors, and all natural numbers, respectively. For an $r \in \mathbb{N}$, we define $\mathbb{N}_r=\{0, 1, \ldots, r\}$ and $\mathbb{N}_r^+=\{1, \ldots, r\}$. We write $\mathbb{S}_+^{n}$ ($\mathbb{S}_{++}^{n}$) to  denote the set of symmetric positive semidefinite (positive definite) matrices of size $n$.  Boldface uppercase letters denote matrices, and lowercase letters denote vectors.  $[\mathbf{x}]_i$ denotes the $i$-th component of $\mathbf{x}$. The entry in the $i$-th row and $j$-th column of matrix $\mathbf{A}$ is denoted by $[\mathbf{A}]_{i,j}$. The $i$-th row and the $j$-th column of $\mathbf{A}$ are denoted by $[\mathbf{A}]_{i,:}$ and $[\mathbf{A}]_{:,j}$ respectively. $\mathbf{0}$ and $\mathbf{I}$ denote the zero vector and the identity matrix of appropriate dimension, respectively. $\mathbf{e}_i$ denotes the $i$-th standard basis vector. The generalized Rayleigh quotient of the matrix pair ($\mathbf{A}$,$\mathbf{B}$) is $R(\mathbf{x};\mathbf{A},\mathbf{B})$ given in \eqref{RayleighQ}. When $\mathbf{A}$ and $\mathbf{B}$ are clear from the context, we will use $R(\mathbf{x})$ to replace $R(\mathbf{x};\mathbf{A},\mathbf{B})$ for simplicity.  For $\mathbf{x} \in \mathbb{R}^n$, $\|\mathbf{x}\|_0$ denotes the number of nonzero entries of $\mathbf{x}$, and $\|\mathbf{x}\|_2$ denotes the $\ell_2$ norm of $\mathbf{x}$. $\mathcal{S}({\mathbf{x}}):=\{i:[\mathbf{x}]_i \neq 0\}$ denotes the support of $\mathbf{x}$, i.e., the index set of the nonzero entries of $\mathbf{x}$. $\mathcal{Z}({\mathbf{x}}):=\{i:[\mathbf{x}]_i = 0\}$ is the index set of the zero entries of $\mathbf{x}$. For a set $S$, we write $|S|$ to represent its cardinality.

The existence of optimal solutions to sGEP is guarantted by the following theorem. 
\begin{theorem}\label{existence thm}
The set of optimal solutions to sGEP~\eqref{sGEP} is nonempty.
\end{theorem}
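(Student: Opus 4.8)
The plan is to exploit the scale invariance of the generalized Rayleigh quotient in order to replace the unbounded, origin-excluded feasible set by a compact one, and then to handle the combinatorial $\ell_0$ constraint by splitting the problem into finitely many continuous optimization problems over compact sets.

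First I would record that $R(t\mathbf{x};\mathbf{A},\mathbf{B}) = R(\mathbf{x};\mathbf{A},\mathbf{B})$ for every scalar $t \neq 0$ and every $\mathbf{x} \neq \mathbf{0}$, so the objective is constant along each ray through the origin; moreover scaling changes neither $\mathcal{S}(\mathbf{x})$ nor $\|\mathbf{x}\|_0$. Consequently, the map $\mathbf{x} \mapsto \mathbf{x}/\|\mathbf{x}\|_2$ shows that the supremum of $R$ over the feasible set of \eqref{sGEP} coincides with its supremum over the restricted set $\{\mathbf{x}: \|\mathbf{x}\|_2 = 1,\ \|\mathbf{x}\|_0 \le s\}$, and any maximizer on this restricted set is automatically feasible and optimal for \eqref{sGEP}. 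This step removes the two sources of non-compactness, namely unboundedness and the exclusion of the origin.

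Next I would decompose the restricted feasible set by support. Since $s \le n$, the support of any feasible $\mathbf{x}$ is contained in some $S$ belonging to the finite collection $\mathcal{I} := \{S \subseteq \{1,\dots,n\}: |S| = s\}$. For each fixed $S$, the set $C_S := \{\mathbf{x}: \|\mathbf{x}\|_2 = 1,\ \mathcal{S}(\mathbf{x}) \subseteq S\}$ is the intersection of the unit sphere with the coordinate subspace spanned by $\{\mathbf{e}_i: i \in S\}$; being a closed and bounded subset of a finite-dimensional space, it is nonempty and compact. The analytic heart of the argument is the continuity of $R$ on $C_S$: because $\mathbf{B} \in \mathbb{S}_{++}^n$, for every $\mathbf{x} \in C_S$ we have $\mathbf{x}^\top \mathbf{B} \mathbf{x} \ge \lambda_{\min}(\mathbf{B}) \|\mathbf{x}\|_2^2 = \lambda_{\min}(\mathbf{B}) > 0$, so the denominator of $R$ is bounded away from zero and $R$ is continuous on $C_S$.

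Finally I would invoke the Weierstrass extreme value theorem to conclude that $R$ attains a maximum value $v_S$ on each compact set $C_S$, say at $\mathbf{x}_S \in C_S$. As $\mathcal{I}$ is finite, I select an index set $S^\star$ maximizing $v_S$ over $S \in \mathcal{I}$; the point $\mathbf{x}_{S^\star}$ then attains the supremum over the whole restricted set and, by the scale-invariance reduction, is an optimal solution of \eqref{sGEP}, so the solution set is nonempty. I do not expect a substantive obstacle; the only points demanding care are guaranteeing that the denominator stays uniformly positive, which is precisely where the positive definiteness of $\mathbf{B}$ enters, and confirming that every feasible support can be enlarged to a set of cardinality exactly $s$.
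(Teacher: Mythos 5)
Your proof is correct, and it follows the same overall skeleton as the paper's --- scale invariance of $R$ and $\|\cdot\|_0$ to reduce to a compact set, then Weierstrass --- but the two technical ingredients differ in an interesting way. First, the normalization: the paper restricts to the ellipsoid $\{\mathbf{x}:\mathbf{x}^\top\mathbf{B}\mathbf{x}=1\}$, which makes the objective collapse to the polynomial $\mathbf{x}^\top\mathbf{A}\mathbf{x}$, so continuity is immediate and positive definiteness of $\mathbf{B}$ is used only to get compactness of the ellipsoid; you restrict to the Euclidean unit sphere and keep $R$ as a ratio, using $\lambda_{\min}(\mathbf{B})>0$ to bound the denominator away from zero. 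Second, and more substantively, the handling of the $\ell_0$ constraint: the paper treats $\{\mathbf{x}:\|\mathbf{x}\|_0\le s\}$ as a single closed set (invoking lower semicontinuity of the $\ell_0$ ``norm'') and intersects it with the compact ellipsoid, whereas you enumerate the $\binom{n}{s}$ candidate supports and write the restricted feasible set as a finite union of compact sphere slices $C_S$, applying Weierstrass on each piece and taking the best of finitely many maxima. Your route is more elementary --- it never needs the lower semicontinuity fact, effectively reproving the closedness of the $\ell_0$ ball via the finite-union-of-subspaces structure --- and it makes explicit the combinatorial decomposition by support that the paper itself leans on later in its NP-hardness discussion; the paper's version buys brevity, packaging the whole feasible region as one compact set in a three-line argument. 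Both are complete proofs, and your closing caveats (uniform positivity of the denominator, enlarging supports to cardinality exactly $s$) are exactly the right points to flag and are handled correctly.
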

\begin{proof}\ \ Notice that both functions $R(\cdot; \mathbf{A},\mathbf{B})$ and $\|\cdot\|_0$ are homogeneous with degree zero, that is, for any nonzero scalar $\alpha$ and nonzero vector $\mathbf{x}$, 
$$
R(\alpha\mathbf{x}; \mathbf{A},\mathbf{B})=R(\mathbf{x}; \mathbf{A},\mathbf{B}) \quad \mbox{and} \quad 
\|\alpha \mathbf{x}\|_0=\|\mathbf{x}\|_0.
$$
Let $X^*$ be the set of optimal solutions of \eqref{sGEP} and let $\tilde{X}^*$ be the set of optimal solutions of the following optimization problem 
\begin{equation}\label{sGEP-1}
    \max\{R(\mathbf{x}; \mathbf{A},\mathbf{B}): \mathbf{x}\in \mathbb{R}^n, \mathbf{x}^\top\mathbf{B}\mathbf{x}=1,  \|\mathbf{x}\|_0\le s\}.
\end{equation} 
Then, we should have $X^* = \cup_{\alpha \neq 0} \alpha \tilde{X}^*$. Therefore, to show $X^*$ nonempty, it is sufficient to show that the set $\tilde{X}^*$ is nonempty.  

Under the constraint of $\mathbf{x}^\top\mathbf{B}\mathbf{x}=1$, the function $R(\mathbf{x}; \mathbf{A},\mathbf{B})=\mathbf{x}^\top \mathbf{A} \mathbf{x}$ is continuous with respect to $\mathbf{x}$. The set of $\{\mathbf{x}: \mathbf{x}^\top\mathbf{B}\mathbf{x}=1\}$ is compact due to the positive definiteness of $\mathbf{B}$, and the set of $\{\mathbf{x}: \|\mathbf{x}\|_0\le s\}$ is closed from the lower-semicontinuousness of the $\ell_0$ norm. Therefore, the intersection of these two sets is compact. Thus, the optimal solutions to \eqref{sGEP-1} are achievable. This completes the proof of this theorem. 
\end{proof}


The task of solving an sGEP consists of two parts: finding an optimal support, which is the support of any optimal solution, and computing the largest generalized eigenvalue of the submatrices corresponding to that support. The computational complexity for the second part is  $O(s^3)$, where $s$ is the sparsity level of \eqref{sGEP}. The complexity of solving sGEP is dominated by the first part, which is NP-hard. The number of possible choices for the optimal support is 
${n \choose s}=\frac{n!}{s!(n-s)!}$. Under the assumptions that the optimal support is unique for an sGEP and that $s$ is much smaller than $n$, the effects to the possible choices of the optimal support by increasing the sparsity $s$ and the dimension $n$ each by one reflect in the following relations
\[
{n\choose s+1}/{n\choose s}=\frac{n-s}{s+1}\approx \frac{n}{s} \quad \mbox{and}\quad
{n+1\choose s}/{n\choose s}=\frac{n+1}{n+1-s}\approx 1
\]
for a large $n$. The first relation in the above says that the complexity of finding the optimal support increases rapidly with respect to the sparsity $s$ while the second one indicates that the complexity of finding the optimal support increases much slower with respect to the dimension $n$. Succinctly, the NP-hardness of sGEP is mainly contributed by the sparsity of the problem rather than its dimension. We shall expect that an algorithm solving sGEP exactly would consume an unacceptable amount of time even under a setting with moderate sparsity and dimension. Hence, it is very interesting to develop efficient algorithms for solving sGEP approximately. 

\section{The Successive Two-Stage Framework for sGEP} \label{sec framework}
In this section, we will propose a successive two-stage framework for sGEP that is described in  Algorithm \ref{alg:generalframework}. There are two main stages in the pseudo-code of this algorithm. At the $t$-th iteration, Stage-1 obtains $\mathbf{x}^{(t)}$ from ${\hat{\mathbf{x}}}^{(t-1)}$, the output of Stage-2 in the previous iteration, using an algorithm for sGEP; Stage-2 obtains $\hat{\mathbf{x}}^{(t)}$ based on $\mathbf{x}^{(t)}$ using a proposed support alteration. The two-stage framework performs successively until convergence is achieved. The general requirements for two stages are outlined as follows: 

\begin{algorithm}[htb] 
	\caption{Successive two-stage framework for sGEP}	
	\begin{algorithmic}
		\STATE {\bfseries Input:} $\mathbf{A} \in \mathbb{S}_+^n$, $\mathbf{B} \in \mathbb{S}_{++}^n$, and sparsity level $s$.
		\STATE {\bfseries Initialization:} $t:=0$. Find $\hat{\mathbf{x}}^{(0)}\neq 0$ such that $\|\hat{\mathbf{x}}^{(0)}\|_0\le s$.
		\REPEAT
            \STATE $t:=t+1$.
		\STATE Stage-1: Acquire $\mathbf{x}^{(t)}$ from ${\hat{\mathbf{x}}}^{(t-1)}$ through an algorithm for sGEP. 
		\STATE Stage-2: Obtain ${\hat{\mathbf{x}}}^{(t)}$ from $\mathbf{x}^{(t)}$ via a support alteration.
		\UNTIL {Convergence}
		\STATE {\bfseries Output:} $\mathbf{x}^{(t)}$
	\end{algorithmic}
 \label{alg:generalframework}
\end{algorithm}

\textit{Stage-1.} This stage employs a specific algorithm to solve problem \eqref{sGEP}, beginning with an initial point. The chosen iterative algorithm should satisfy the following criteria:
\begin{itemize}
    \item \textit{Monotonicity:} The objective value of the generated output, $R(\mathbf{x}^{(t)})$, should be no less than $R({\hat{\mathbf{x}}}^{(t-1)})$, the objective value of the initial point ${\hat{\mathbf{x}}}^{(t-1)}$.
    \item \textit{Global convergence:} The employed algorithm converges to some points with certain optimality conditions for \eqref{sGEP}, such as stationarity.
    \item \textit{Efficiency:} { The employed algorithm offers rapid convergence and has a low computational complexity.}
\end{itemize}




\textit{Stage-2.} The goal of the second stage is to generate a vector ${\hat{\mathbf{x}}}^{(t)}$ based on ${\mathbf{x}}^{(t)}$. This ${\hat{\mathbf{x}}}^{(t)}$ will be used as the initial point of the algorithm for the first stage in the next iteration, aiming to generate a superior ${\mathbf{x}}^{(t+1)}$ than ${\mathbf{x}}^{(t)}$ in the sense that $R({\mathbf{x}}^{(t+1)})>R({\mathbf{x}}^{(t)})$.
The key of achieving this goal lies in selecting an ${\hat{\mathbf{x}}}^{(t)}$ that satisfies two primary conditions:
\begin{itemize}
    \item ${\hat{\mathbf{x}}}^{(t)}$ must be feasible.

    \item ${\hat{\mathbf{x}}}^{(t)}$ should not be proximate to $\mathbf{x}^{(t)}$ as this $\mathbf{x}^{(t)}$ has already been a local optimum. The following discussion will show that $\mathcal{S}({\hat{\mathbf{x}}}^{(t)})\ne \mathcal{S}(\mathbf{x}^{(t)})$ guarantees this condition.
\end{itemize}

Delving into these requirements, we consider any arbitrary $\mathbf{x}$ such that $\|\mathbf{x}\|_0\le s$ and $\mathcal{S}(\mathbf{x}) \ne \mathcal{S}(\mathbf{x}^{(t)})$. Immediately, we have that if $\|\mathbf{x}^{(t)}\|_0=s$ there holds
$$
\|\mathbf{x}-\mathbf{x}^{(t)}\|_2 \ge \min_{j \in \mathcal{S}(\mathbf{x}^{(t)})}|[\mathbf{x}^{(t)}]_j|>0.
$$ 
This means that any feasible $\mathbf{x} \in \mathbb{R}^n$ having a different support from $\mathbf{x}^{(t)}$  must be at least a certain distance away from $\mathbf{x}^{(t)}$, therefore, satisfying the second requirement. To find a ${\hat{\mathbf{x}}}^{(t)} \in \mathbb{R}^n$ such that the support of  ${\hat{\mathbf{x}}}^{(t)}$ is different from that of $\mathbf{x}^{(t)}$, we will propose a strategy called \textit{support alteration}. This approach generates a new initial point by suitably altering the support of $\mathbf{x}^{(t)}$. The intention is to increase the likelihood of the first stage  delivering a higher objective value in the subsequent iteration. 

The rest of this section will provide details on the two stages in Algorithm~\ref{alg:generalframework}. 

\subsection{Gradient-Based Algorithms}  Stage 1 of Algorithm~\ref{alg:generalframework} requires a fast and efficient solver for sGEP. 
Any algorithm meeting the criteria listed in the above can be chosen as a solver for Stage-1 of Algorithm \ref{alg:generalframework}. We will use gradient-based algorithms in Stage-1, owing to their simplicity in implementation and their theoretical guarantees of convergence. Specifically speaking, we adopt TPM \cite{yuan2013truncated} for sPCA case ($\mathbf{B}=\mathbf{I}$),  and PGSA with monotone line search (PGSA\_ML) \cite{zhang2022first} or rifle (Truncated Rayleigh flow method) \cite{tan2018sparse} for a general sGEP case as the solvers used in the first stage. The pseudo code of PGSA\_ML is stated in Algorithm \ref{alg:PGSA}, where $\text{Truncate}(\mathbf{x},s)$ denotes an operator that outputs a vector preserving the largest $s$ entries in absolute of $\mathbf{x}$ and has all remaining entries being zero. Actually, TPM is a special case of PGSA\_ML when $\mathbf{B}=\mathbf{I}$, $a=0$ and $\alpha_k=\frac{1}{2\|\mathbf{B}\|_2^2}=\frac{1}{2}$ for $k\in\mathbb{N}$. Rifle is also a special case of PGSA\_ML as $\alpha_k$ is fixed to be a number in $(0, \frac{1}{2\|\mathbf{B}\|_2^2})$ for all $k\in\mathbb{N}$. Here, as suggested in \cite{zhang2022first}, we choose the initial $\alpha_k$ for PGSA\_ML in the following formula
\begin{equation*}
    \alpha_k=
    \begin{cases}
        \max\{\underline{\alpha},\min\{\overline{\alpha},\frac{\|\Delta \mathbf{x}\|_2^2}{|\langle\Delta \mathbf{x},\Delta \mathbf{B} \mathbf{x}\rangle|}\}\}, &\text{  if } \langle\Delta \mathbf{x},\Delta \mathbf{B} \mathbf{x}\rangle\ne 0,\\
        \overline{\alpha}, & \text{  else,}
    \end{cases},
\end{equation*}
where $\Delta \mathbf{x}:=\mathbf{x}^{(k-1)}-\mathbf{x}^{(k-2)}$ and $\Delta \mathbf{B}\mathbf{x}:=2\mathbf{B}(\mathbf{x}^{(k-1)}-\mathbf{x}^{(k-2)})$.


\begin{algorithm}[htb] 
	\caption[PGSA with monotone line search (PGSA\_ML)]{PGSA with Monotone Line search (PGSA\_ML)}	
	\begin{algorithmic}
        \label{alg:PGSA}
        \STATE {\bfseries Input:} $\mathbf{A} \in \mathbb{S}_+^n$, $\mathbf{B} \in \mathbb{S}_{++}^n$, sparsity level $s$. 
        \STATE {\bfseries Initialization:} An initial point $\mathbf{x}^{(0)}\neq 0$ such that $\|\mathbf{x}^{(0)}\|_0\le s$ and $\mathbf{x}^{(0)\top} \mathbf{A}\mathbf{x}^{(0)}\ne 0$, $a\ge0$, $\eta\in (0,1)$, $0<\underline{\alpha}<\overline{\alpha}$; set  $k:=0$.
        \REPEAT
            \STATE $k := k+1$ and choose $\alpha_k\in [\underline{\alpha},\overline{\alpha}]$.
            \REPEAT
                \STATE $\mathbf{\tilde x}:=\text{Truncate}\left(\mathbf{x}^{(k-1)}+2\alpha_{k}(-\mathbf{B}\mathbf{x}^{(k-1)}+\frac{\mathbf{A}\mathbf{x}^{(k-1)}}{R(\mathbf{x}^{(k-1)})}),s\right)$.
                \STATE $\mathbf{\tilde x}:=\mathbf{\tilde x}/\|\mathbf{\tilde x}\|_2$.
                \STATE $\alpha_k:=\alpha_k \eta$.
            \UNTIL {$\frac{1}{R(\mathbf{\tilde x})}\le \frac{1}{R(\mathbf{x}^{(k-1)})}-\frac{a}{2}\|\mathbf{\tilde x}-\mathbf{x}^{(k-1)}\|_2^2$}.
            \STATE $\mathbf{x}^{(k)} := \mathbf{\tilde{x}}$.
        \UNTIL {convergence}
        \STATE {\bfseries Output:} $\mathbf{x}^{(k)}$
	\end{algorithmic}
\end{algorithm}

\subsection{Support Alteration}
This subsection will focus on \textit{support alteration}, a procedure which finds a potentially improved initial point for the gradient-based algorithm employed in Stage 1. Suppose $\mathbf{x} \in \mathbb{R}^n$ is a nonzero vector that we obtained in Stage 1. To alter the support of $\mathbf{x}$ to obtain a new solution as a potentially better initial point for the gradient-based algorithm, support alteration finds a vector $\hat{\mathbf{x}}$ having the following form
\begin{equation}\label{eq:x_hat}
    \hat{\mathbf{x}}=\mathbf{x} + \sum_{t=1}^{r} (-[\mathbf{x}]_{j_t} \mathbf{e}_{j_t} + \alpha_t \mathbf{e}_{i_t})
\end{equation}
such that $\mathcal{S}(\hat{\mathbf{x}})$ potentially consists of $r$ elements from $\mathcal{Z}(\mathbf{x})$ and $(\|\mathbf{x}\|_0-r)$ elements from $\mathcal{S}(\mathbf{x})$ corresponding to the largest Rayleigh quotient. Figure \ref{fig:x_hat} illustrates the process of acquiring $\hat{\mathbf{x}}$ from $\mathbf{x}$ by altering its support.
\begin{figure}[htbp]
    \centering
    \includegraphics[scale=0.7,trim=40 620 80 50]{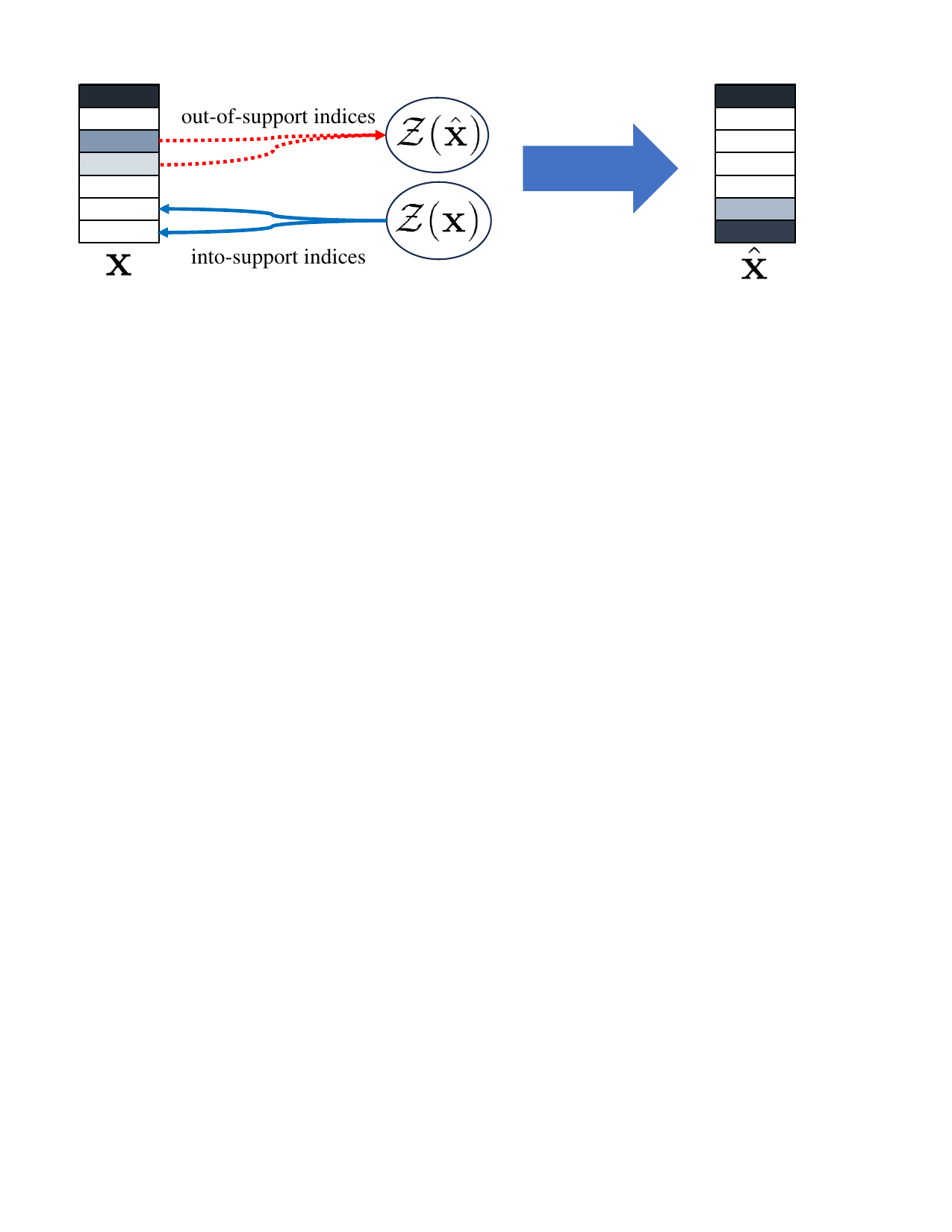}
    \caption{Illustration of acquiring $\hat{\mathbf{x}}$ from $\mathbf{x}$ in the case $r=2$. The array labelled by $\mathbf{x}$ is the vector to be altered while the array labelled by $\hat{\mathbf{x}}$ is the altered vector of $\mathbf{x}$. Each rectangle in an array represents an entry of the corresponding vector. Colored rectangles represent nonzero entries while white rectangles represent zero entries. The vector $\hat{\mathbf{x}}$ is formed from  $\mathbf{x}$ by moving $r$ indices in $\mathcal{S}(\mathbf{x})$ into $\mathcal{Z}(\hat{\mathbf{x}})$ and the same number of indices in  $\mathcal{Z}(\mathbf{x})$ are moved into $\mathcal{S}(\hat{\mathbf{x}})$. }
    \label{fig:x_hat}
\end{figure}
We call the parameter $r$ the number of swapped pairs or the number of indices to be altered. Specifically, $\{j_1,\ldots,j_r\} \subset \mathcal{S}(\mathbf{x})$ represents the set of \textit{out-of-support} indices, and $\{i_1,\ldots,i_r\} \subset \mathcal{Z}(\mathbf{x})$ denotes the set of \textit{into-support} indices.
For this vector $\mathbf{x}$ with $\|\mathbf{x}\|_0 \le s$, we define the feasible set for a fixed $r \in \mathbb{N}_{\|\mathbf{x}\|_0}$ that 
$$
\mathcal{P}_r(\mathbf{x}):=\{\hat{\mathbf{x}}=\mathbf{x} + \sum_{t=1}^{r} (-[\mathbf{x}]_{j_t} \mathbf{e}_{j_t} + \alpha_t \mathbf{e}_{i_t}): (j_t,i_t) \in \mathcal{S}(\mathbf{x})\times \mathcal{Z}(\mathbf{x}),\alpha_t\in \mathbb R\}.
$$
The set of all $\hat{\mathbf{x}}$ having the form \eqref{eq:x_hat} is
$$
\mathcal{P}(\mathbf{x}):=\cup_{r=0}^ {\|\mathbf{x}\|_0}\mathcal{P}_r(\mathbf{x}).
$$
Among all $\hat{\mathbf{x}}$ in $\mathcal{P}(\mathbf{x})$, we desire to find the optimal one, denoted by $\hat{\mathbf{x}}^*$, as the solution of the following optimization problem
\begin{equation}\label{prob:x_hat}
    \hat{\mathbf{x}}^*:=\arg\max\{R(\hat{\mathbf{x}}):\hat{\mathbf{x}}\in\mathcal{P}(\mathbf{x})\}.
\end{equation}
Under some conditions, the relationship between \eqref{prob:x_hat} and \eqref{sGEP} is discussed as follows.
\begin{Proposition}\label{prop:optimal}
Let $\mathbf{x}^*$ be an optimal solution to \eqref{sGEP}. For a given $\mathbf{x}$, let $S:={\mathcal{S}(\mathbf{x})\cap \mathcal{S}(\mathbf{x}^*)}$. If  $[\mathbf{x}]_S=a [\mathbf{x}^*]_S$ for some $a\in \mathbb{R}\setminus \{0\}$, then there exists a vector $\hat{\mathbf{x}}^* \in \mathcal{P}_r(\mathbf{x})$ with $r=\|\mathbf{x}\|_0-|S|$  such that $\hat{\mathbf{x}}^*$ is a solution to \eqref{prob:x_hat}.  
\end{Proposition}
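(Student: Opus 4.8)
The plan is to exploit the optimality of $\mathbf{x}^*$ together with the degree-zero homogeneity of $R$, and to exhibit a concrete maximizer of \eqref{prob:x_hat} of the required form rather than to reason about \eqref{prob:x_hat} abstractly. First I would record the guiding observation that every $\hat{\mathbf{x}} \in \mathcal{P}(\mathbf{x})$ is feasible for \eqref{sGEP}: each such $\hat{\mathbf{x}}$ is obtained from $\mathbf{x}$ by zeroing out some entries indexed in $\mathcal{S}(\mathbf{x})$ and activating at most the same number of entries indexed in $\mathcal{Z}(\mathbf{x})$, so $\|\hat{\mathbf{x}}\|_0 \le \|\mathbf{x}\|_0 \le s$. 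Consequently, by the optimality of $\mathbf{x}^*$, we have $R(\hat{\mathbf{x}}) \le R(\mathbf{x}^*)$ for every $\hat{\mathbf{x}} \in \mathcal{P}(\mathbf{x})$, so $R(\mathbf{x}^*)$ is an upper bound for the optimal value of \eqref{prob:x_hat}. It therefore suffices to produce a single point of $\mathcal{P}_r(\mathbf{x})$, with $r = \|\mathbf{x}\|_0 - |S|$, that attains the value $R(\mathbf{x}^*)$.

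The natural candidate is $\hat{\mathbf{x}}^* := a\mathbf{x}^*$, for which $R(a\mathbf{x}^*) = R(\mathbf{x}^*)$ by homogeneity. The core of the argument is to verify $a\mathbf{x}^* \in \mathcal{P}_r(\mathbf{x})$ by tracking supports. I would partition $\mathcal{S}(\mathbf{x}) = S \cup J$ with $J := \mathcal{S}(\mathbf{x}) \setminus \mathcal{S}(\mathbf{x}^*)$, and $\mathcal{S}(\mathbf{x}^*) = S \cup I$ with $I := \mathcal{S}(\mathbf{x}^*) \setminus \mathcal{S}(\mathbf{x}) \subseteq \mathcal{Z}(\mathbf{x})$, both unions being disjoint. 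Then $a\mathbf{x}^*$ is reached from $\mathbf{x}$ as follows: on $S$ there is no change, since $[\mathbf{x}]_S = a[\mathbf{x}^*]_S = [a\mathbf{x}^*]_S$ by hypothesis; on $J$ one subtracts $[\mathbf{x}]_{j}\mathbf{e}_{j}$ for each $j \in J$, matching the out-of-support terms $-[\mathbf{x}]_{j_t}\mathbf{e}_{j_t}$; and on $I$ one adds $a[\mathbf{x}^*]_{i}\mathbf{e}_{i}$ for each $i \in I$, matching the into-support terms $\alpha_t \mathbf{e}_{i_t}$. This is precisely the form \eqref{eq:x_hat}.

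The step I expect to be the main obstacle is matching the cardinalities to the prescribed index $r$. The number of out-of-support indices is $|J| = \|\mathbf{x}\|_0 - |S| = r$, which is exactly right, so all $r$ removal slots are used by $J$. The number of genuine into-support indices is $|I| = \|\mathbf{x}^*\|_0 - |S|$, and to fit the template of $r$ pairs one needs $|I| \le r$, i.e. $\|\mathbf{x}^*\|_0 \le \|\mathbf{x}\|_0$; this holds in our setting because the Stage-1 output satisfies $\|\mathbf{x}\|_0 = s \ge \|\mathbf{x}^*\|_0$, and the remaining $r - |I|$ into-support slots are then filled by arbitrary indices of $\mathcal{Z}(\mathbf{x})$ carrying coefficient $\alpha_t = 0$. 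I would flag this cardinality condition explicitly, since without $\|\mathbf{x}^*\|_0 \le \|\mathbf{x}\|_0$ the vector $a\mathbf{x}^*$ would have more than $\|\mathbf{x}\|_0$ nonzeros and fail to lie in $\mathcal{P}(\mathbf{x})$ at all. Once the membership $a\mathbf{x}^* \in \mathcal{P}_r(\mathbf{x}) \subseteq \mathcal{P}(\mathbf{x})$ is established and $R(a\mathbf{x}^*) = R(\mathbf{x}^*)$ meets the upper bound from the first paragraph, I conclude that $\hat{\mathbf{x}}^* = a\mathbf{x}^*$ solves \eqref{prob:x_hat}. I would also check the benign points that $a\mathbf{x}^* \neq \mathbf{0}$ (as $a \neq 0$ and $\mathbf{x}^* \neq \mathbf{0}$) and that the zero-coefficient padding is admissible within the definition of $\mathcal{P}_r(\mathbf{x})$.
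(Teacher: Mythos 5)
Your proof is correct and follows essentially the same route as the paper's: both exhibit the explicit support alteration $\hat{\mathbf{x}}^* = \mathbf{x} - \sum_{j\in\mathcal{S}(\mathbf{x})\setminus S}[\mathbf{x}]_j\mathbf{e}_j + a\sum_{i\in\mathcal{S}(\mathbf{x}^*)\setminus S}[\mathbf{x}^*]_i\mathbf{e}_i$, observe that it equals $a\mathbf{x}^*$ by the hypothesis $[\mathbf{x}]_S = a[\mathbf{x}^*]_S$, and conclude via the degree-zero homogeneity of $R$ together with the optimality of $\mathbf{x}^*$. The only difference is that you are more explicit than the paper on two points it leaves tacit: that every element of $\mathcal{P}(\mathbf{x})$ is feasible for \eqref{sGEP} (which supplies the upper bound $R(\mathbf{x}^*)$ on the value of \eqref{prob:x_hat}), and that fitting $a\mathbf{x}^*$ into the $r$-pair template requires $\|\mathbf{x}^*\|_0 \le \|\mathbf{x}\|_0$ with zero-coefficient padding of the unused into-support slots---a cardinality condition the paper's one-line membership claim silently assumes.
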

\begin{proof}\ \  We write   
$$
\hat{\mathbf{x}}^*=\mathbf{x}-\sum_{t \in \mathcal{S}(\mathbf{x})\setminus S}[\mathbf{x}]_t \mathbf{e}_t + a \sum_{t \in \mathcal{S}(\mathbf{x}^*)\setminus S}[\mathbf{x}^*]_t \mathbf{e}_t,
$$
which is in $\mathcal{P}_r(\mathbf{x})$. From the condition $[\mathbf{x}]_S=a [\mathbf{x}^*]_S$, we have  $\hat{\mathbf{x}}^*=a \mathbf{x}^*$. This completes the proof from the homogeneity of $R(\cdot)$. 
\end{proof}

To obtain an optimal $\hat{\mathbf{x}}^*$ of \eqref{prob:x_hat}, we need to obtain an optimal $r$, $(j_t,i_t)$, and $\alpha_t$. It looks unclear how to optimize those parameters as a whole. Hence, we divide those parameters to conquer the problem. The following part of this section supposes $r$ is pre-given, and focuses on optimize $(j_t,i_t)$, and $\alpha_t$. That is, support alteration with the pre-given $r$ solves the following problem
\begin{equation}\label{prob:x_hat_r}
    \hat{\mathbf{x}}^*_r:=\arg\max\{R(\hat{\mathbf{x}}):\hat{\mathbf{x}}\in\mathcal{P}_r(\mathbf{x})\}.
\end{equation}
To tackle \eqref{prob:x_hat_r}, we 
simplify the selection of $\alpha_1,\dots,\alpha_r$ by assuming that each $\alpha_t$, where $t \in \mathbb{N}_r^+$, solely depends on its corresponding index pair, $(j_t,i_t)$. That is, for each index pair $(j,i)$, the corresponding $\alpha$ is an arbitrary maximum of the following univariate problem:
\begin{equation}
	\alpha_{(j,i),\mathbf x}^*:=\arg\max\{\tilde{R}_{(j,i),\mathbf x}(\alpha): \ \alpha \in \mathbb{R}\}, \label{bestalpha}
\end{equation}
where 
\begin{equation}
	\tilde{R}_{(j,i),\mathbf x}(\alpha):=R(\mathbf x + (-[\mathbf x]_j \mathbf{\mathbf e}_{j} + \alpha \mathbf{e}_{i})). \label{def:tidleR}
\end{equation} 
The following proposition confirms that maxima set $\alpha^*_{(j,i),\mathbf{x}}$ in \eqref{bestalpha} has a closed-form expression. 
\begin{theorem}\label{thm sol}
Let $(\mathbf{A},\mathbf{B})$ be a matrix pair in $\mathbb{S}_+^{n} \times \mathbb{S}_{++}^{n}$. For a nonzero vector $\mathbf{x} \in \mathbb{R}^n$ and an index pair $(j,i) \in \mathbb{N}_n^+ \times \mathcal{Z}({\mathbf{x}})$ with $j\ne i$, let $\mathbf{y}:= \mathbf{x} - [\mathbf{x}]_j \mathbf{e}_{j}$. Then, the following statements for the set $\alpha^*_{(j,i),\mathbf{x}}$ defined in \eqref{bestalpha} hold: 
\begin{itemize}
\item[(i)] If $\|\mathbf y\|_0 = 0$, then $\alpha^*_{(j,i),\mathbf{x}}=\mathbb{R} \setminus \{0\}$.

\item[(ii)]   If $\|\mathbf y\|_0 \ge 1$, then
\begin{align*}
\alpha^*_{(j,i),\mathbf{x}}=&
\left\{\begin{array}{rl}
\mathbb{R}, & \mbox{ if $|[\mathbf{Q}]_{:,(1,2)}|=|[\mathbf{Q}]_{:,(1,3)}|=0$,}\\
\{-\frac{|[\mathbf{Q}]_{:,(2,3)}|}{|[\mathbf{Q}]_{:,(1,3)}|}\},   &  \mbox{ if $|[\mathbf{Q}]_{:,(1,2)}|=0$ and $|[\mathbf{Q}]_{:,(1,3)}|<0$,}\\
  \{+\infty, -\infty\},   & \mbox{ if  $|[\mathbf{Q}]_{:,(1,2)}|=0$ and $|[\mathbf{Q}]_{:,(1,3)}|>0$,}\\
\{\alpha^-\},   &  \mbox{ if $|[\mathbf{Q}]_{:,(1,2)}|\ne0$,}
\end{array}
\right.
\end{align*}
where 
$$
\mathbf{Q}:=\begin{bmatrix}
     [\mathbf{A}]_{i,i} & [\mathbf{A}\mathbf{y}]_{i}&\mathbf{y}^\top \mathbf{A}\mathbf{y} \\
     [\mathbf{B}]_{i,i} & [\mathbf{B}\mathbf{y}]_{i}&\mathbf{y}^\top \mathbf{B}\mathbf{y}
\end{bmatrix}
$$
with $[\mathbf{Q}]_{:,(1,2)}$, $[\mathbf{Q}]_{:,(1,3)}$, and $[\mathbf{Q}]_{:,(2,3)}$ are the $2 \times 2$ matrices formed by the first two columns, the first and the third columns, and the last two columns of $\mathbf{Q}$, respectively; and 
\[\alpha^-:=\frac{-|[\mathbf{Q}]_{:,(1,3)}|-\sqrt{|[\mathbf{Q}]_{:,(1,3)}|^2-4|[\mathbf{Q}]_{:,(1,2)}|\cdot |[\mathbf{Q}]_{:,(2,3)}|}}{2|[\mathbf{Q}]_{:,(1,2)}|}.\]
\end{itemize}
\end{theorem}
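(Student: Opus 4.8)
The plan is to treat $\tilde{R}_{(j,i),\mathbf{x}}(\alpha)$ as a ratio of two quadratics in the scalar $\alpha$ and to reduce the problem to locating the roots of the quadratic appearing in the numerator of its derivative. Writing the argument as $\mathbf{y}+\alpha\mathbf{e}_i$ and expanding, I obtain $\tilde{R}(\alpha)=N(\alpha)/D(\alpha)$ with $N(\alpha)=[\mathbf{A}]_{i,i}\alpha^2+2[\mathbf{A}\mathbf{y}]_i\alpha+\mathbf{y}^\top\mathbf{A}\mathbf{y}$ and $D(\alpha)=[\mathbf{B}]_{i,i}\alpha^2+2[\mathbf{B}\mathbf{y}]_i\alpha+\mathbf{y}^\top\mathbf{B}\mathbf{y}$; the coefficient vectors are precisely the three columns of $\mathbf{Q}$. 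Since $[\mathbf{y}]_i=[\mathbf{x}]_i=0$ and $\mathbf{B}\in\mathbb{S}_{++}^n$, the vector $\mathbf{y}+\alpha\mathbf{e}_i$ is nonzero whenever $\mathbf{y}\neq\mathbf{0}$, so $D(\alpha)>0$ on all of $\mathbb{R}$ in that case, while if $\mathbf{y}=\mathbf{0}$ the argument is nonzero exactly for $\alpha\neq 0$.

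First I would dispose of case (i): when $\|\mathbf{y}\|_0=0$ all first-order and constant coefficients vanish, so $\tilde{R}(\alpha)\equiv[\mathbf{A}]_{i,i}/[\mathbf{B}]_{i,i}$ on its domain $\mathbb{R}\setminus\{0\}$, giving $\alpha^*_{(j,i),\mathbf{x}}=\mathbb{R}\setminus\{0\}$. For case (ii) I would compute $N'D-ND'$ and verify the (short but essential) identity $N'D-ND'=2\,g(\alpha)$, where $g(\alpha):=|[\mathbf{Q}]_{:,(1,2)}|\alpha^2+|[\mathbf{Q}]_{:,(1,3)}|\alpha+|[\mathbf{Q}]_{:,(2,3)}|$; the three determinants arise by matching the $2\times2$ minors of $\mathbf{Q}$ against the cross terms of the product. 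Because $D>0$, the sign of $\tilde{R}'$ coincides with that of $g$, so the maximizers are dictated entirely by $g$ together with the common horizontal asymptote $\lim_{\alpha\to\pm\infty}\tilde{R}(\alpha)=[\mathbf{A}]_{i,i}/[\mathbf{B}]_{i,i}$, which is finite because $[\mathbf{B}]_{i,i}>0$.

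The remaining work is a case split on the coefficients of $g$. When $|[\mathbf{Q}]_{:,(1,2)}|=0$ the function $g$ is affine: if additionally $|[\mathbf{Q}]_{:,(1,3)}|=0$, I would use that the first column of $\mathbf{Q}$ is nonzero (since $[\mathbf{B}]_{i,i}>0$) to conclude that columns two and three are each parallel to it, hence parallel to each other, forcing $|[\mathbf{Q}]_{:,(2,3)}|=0$ and $g\equiv 0$; thus $\tilde{R}$ is constant and $\alpha^*_{(j,i),\mathbf{x}}=\mathbb{R}$. If instead $|[\mathbf{Q}]_{:,(1,3)}|\neq 0$, the unique zero of $g$ is a maximizer or a minimizer according to the sign of the slope, which yields the finite root when $|[\mathbf{Q}]_{:,(1,3)}|<0$ and the escape to $\{+\infty,-\infty\}$ (where $\tilde{R}$ approaches its supremum) when $|[\mathbf{Q}]_{:,(1,3)}|>0$. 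When $|[\mathbf{Q}]_{:,(1,2)}|\neq 0$, $g$ is a genuine quadratic and I would run a sign-of-$g$ analysis: the unique local maximizer of $\tilde{R}$, namely the root at which $g$ passes from positive to negative, turns out to be the global maximizer, and in both sign regimes of the leading coefficient this root equals $\alpha^-$ (in the opens-upward case it is the smaller root, in the opens-downward case the larger, but the formula $\alpha^-$ captures both).

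The step I expect to be the main obstacle is establishing that the discriminant $|[\mathbf{Q}]_{:,(1,3)}|^2-4|[\mathbf{Q}]_{:,(1,2)}||[\mathbf{Q}]_{:,(2,3)}|$ is nonnegative, so that $\alpha^-$ is real, and correctly identifying it as the maximizer. Rather than seeking a direct algebraic lower bound, I would exploit the asymptotic structure: $\tilde{R}$ is smooth on $\mathbb{R}$ with equal finite limits at $+\infty$ and $-\infty$, hence cannot be strictly monotone, so $g$ must change sign and therefore must possess real roots, which is exactly nonnegativity of the discriminant. The same asymptote picture then shows that the outer branches of $\tilde{R}$ only return to the common limit while one interior extremum lies strictly above it, pinning down $\alpha^-$ as the global maximizer and completing the proof.
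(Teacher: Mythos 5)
Your proposal is correct and follows essentially the same route as the paper's proof: writing $\tilde{R}_{(j,i),\mathbf{x}}$ as a ratio of quadratics whose coefficients are the columns of $\mathbf{Q}$, showing the derivative's numerator is $2\bigl(|[\mathbf{Q}]_{:,(1,2)}|\alpha^2+|[\mathbf{Q}]_{:,(1,3)}|\alpha+|[\mathbf{Q}]_{:,(2,3)}|\bigr)$, and combining a case split on these minors with the equal finite limits of $\tilde{R}$ at $\pm\infty$ — including the same contradiction-with-monotonicity argument for the discriminant (which, as in the paper, actually yields strict positivity, since a sign change forces two distinct real roots). No gaps; the only cosmetic difference is that in the doubly-degenerate case you conclude constancy via $g\equiv 0$ while the paper notes the rows of $\mathbf{Q}$ are proportional, which are equivalent observations.
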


\begin{proof}
Part (i): The condition $\|\mathbf{y}\|_0=0$ implies
$$\tilde{R}_{(j,i),\mathbf{x}}(\alpha)=R(\mathbf{y} + \alpha \mathbf{e}_i)=R(\alpha \mathbf{e}_i)= \frac{[\mathbf{A}]_{i,i}}{[\mathbf{B}]_{i,i}}.
$$
Hence,  $\alpha^*_{(j,i),\mathbf{x}}=\mathbb{R} \setminus \{0\}$. 

Part (ii): The condition $\|\mathbf{y}\|_0 \ge 1$ and $i \in \mathcal{Z}({\mathbf{x}}) \subseteq \mathcal{Z}({\mathbf{y}})$ imply $\mathbf{y}+\alpha \mathbf{e}_i \neq \mathbf{0}$. We have $\tilde{R}_{(j,i),\mathbf{x}}(\alpha)=R(\mathbf{y}+\alpha \mathbf{e}_i)$, which can be further written as 
$$
\tilde{R}_{(j,i),\mathbf{x}}(\alpha)=\frac{(\mathbf{y}+\alpha \mathbf{e}_i)^\top \mathbf{A} (\mathbf{y}+\alpha \mathbf{e}_i)}{(\mathbf{y}+\alpha \mathbf{e}_i)^\top \mathbf{B} (\mathbf{y}+\alpha \mathbf{e}_i)}
=\frac{[\mathbf{A}]_{i,i}\alpha^2+2[\mathbf{A} \mathbf{y}]_i \alpha +\mathbf{y}^\top \mathbf{A}\mathbf{y}}{[\mathbf{B}]_{i,i}\alpha^2+2[\mathbf{B} \mathbf{y}]_i \alpha +\mathbf{y}^\top \mathbf{B}\mathbf{y}},
$$
whose domain is the entire real line.  
Taking the derivative of $\tilde{R}_{(j,i),\mathbf{x}}(\alpha)$ with respect to $\alpha$, after some simplifications, it yields 
$$
\frac{d}{d \alpha}\tilde{R}_{(j,i),\mathbf{x}}(\alpha) = \frac{2(|[\mathbf{Q}]_{:,(1,2)}|\alpha^2+|[\mathbf{Q}]_{:,(1,3)}|\alpha+|[\mathbf{Q}]_{:,(2,3)}|)}{([\mathbf{B}]_{i,i}\alpha^2+2[\mathbf{B} \mathbf{y}]_i \alpha +\mathbf{y}^\top \mathbf{B}\mathbf{y})^2}.
$$
To determine which critical points of $\tilde{R}_{(j,i),\mathbf{x}}$ are maximum, we consider three cases according to the values of $|[\mathbf{Q}]_{:,(1,2)}|$ and $|[\mathbf{Q}]_{:,(1,3)}|$.  The following identity is also useful in our discussion
\begin{equation}\label{eq:R-infty}
\lim_{\alpha \rightarrow - \infty}\tilde{R}_{(j,i),\mathbf{x}}(\alpha)=\lim_{\alpha \rightarrow + \infty}\tilde{R}_{(j,i),\mathbf{x}}(\alpha)=\frac{[\mathbf{A}]_{i,i}}{[\mathbf{B}]_{i,i}}.
\end{equation}

Case 1: $|[\mathbf{Q}]_{:,(1,2)}|=|[\mathbf{Q}]_{:,(1,3)}|=0$. In this case, we know that the first row of $\mathbf{Q}$ is a multiple, say $q$, of the second row of $\mathbf{Q}$. This implies that $\tilde{R}_{(j,i),\mathbf{x}}(\alpha)=q$. We conclude that  $\alpha^*_{(j,i),\mathbf{x}} = \mathbb{R}$.

Case 2: $|[\mathbf{Q}]_{:,(1,2)}|=0$ and $|[\mathbf{Q}]_{:,(1,3)}| \neq 0$. There are two possibilities: If $|[\mathbf{Q}]_{:,(1,3)}|<0$, then $\tilde{R}_{(j,i),\mathbf{x}}(\alpha)$ is increasing on the interval $(-\infty, -\frac{|[\mathbf{Q}]_{:,(2,3)}|}{|[\mathbf{Q}]_{:,(1,3)}|})$, and decreasing on the interval $(-\frac{|[\mathbf{Q}]_{:,(2,3)}|}{|[\mathbf{Q}]_{:,(1,3)}|}, \infty)$;   If $|[\mathbf{Q}]_{:,(1,3)}|>0$, then $\tilde{R}_{(j,i),\mathbf{x}}(\alpha)$ is decreasing on the interval $(-\infty, -\frac{|[\mathbf{Q}]_{:,(2,3)}|}{|[\mathbf{Q}]_{:,(1,3)}|})$ and increasing on the interval $(-\frac{|[\mathbf{Q}]_{:,(2,3)}|}{|[\mathbf{Q}]_{:,(1,3)}|}, \infty)$. From \eqref{eq:R-infty}, we obtain 
\[
\alpha^*_{(j,i),\mathbf{x}} =\left\{\begin{array}{rl}
  \{-\frac{|[\mathbf{Q}]_{:,(2,3)}|}{|[\mathbf{Q}]_{:,(1,3)}|}\},   &  \mbox{ if $|[\mathbf{Q}]_{:,(1,3)}|<0$,}\\
  \{+\infty, -\infty\},   & \mbox{ if $|[\mathbf{Q}]_{:,(1,3)}|>0$.}
\end{array}
\right.
\]


Case 3: $|[\mathbf{Q}]_{:,(1,2)}|\neq 0$. First, we show the discriminant $\Delta:=|[\mathbf{Q}]_{:,(1,3)}|^2-4|[\mathbf{Q}]_{:,(1,2)}|\cdot |[\mathbf{Q}]_{:,(2,3)}|$ of the quadratic polynomial $|[\mathbf{Q}]_{:,(1,2)}|\alpha^2+|[\mathbf{Q}]_{:,(1,3)}|\alpha+|[\mathbf{Q}]_{:,(2,3)}|$ is always positive. Assume to the contrary that the discriminant is non-positive, then $\tilde{R}_{(j,i),\mathbf{x}}$ is strictly monotonic on the real line. However, \eqref{eq:R-infty}
contradicts the strict monotonicity property of $\tilde{R}_{(j,i),\mathbf{x}}$. Therefore, the polynomial  $|[\mathbf{Q}]_{:,(1,2)}|\alpha^2+|[\mathbf{Q}]_{:,(1,3)}|\alpha+|[\mathbf{Q}]_{:,(2,3)}|$ has two distinct roots $\alpha^-$ and $\alpha^+$ as follows:
$$
\alpha^-=\frac{-|[\mathbf{Q}]_{:,(1,3)}|-\sqrt{\Delta}}{2|[\mathbf{Q}]_{:,(1,2)}|},\ \alpha^+=\frac{-|[\mathbf{Q}]_{:,(1,3)}|+\sqrt{\Delta}}{2|[\mathbf{Q}]_{:,(1,2)}|}.
$$
If $|[\mathbf{Q}]_{:,(1,2)}|>0$, then $\alpha^-<\alpha^+$. In this scenario, the function $\tilde{R}_{(j,i),\mathbf{x}}$ is increasing on $(-\infty, \alpha^-)$ and $(\alpha^+, \infty)$, and decreasing on $(\alpha^-,\alpha^+)$. On the contrary, if $|[\mathbf{Q}]_{:,(1,2)}|<0$, then $\alpha^->\alpha^+$. The function $\tilde{R}_{(j,i),\mathbf{x}}$ is decreasing on $(-\infty, \alpha^-)$ and $(\alpha^+, \infty)$, and increasing on $(\alpha^-,\alpha^+)$. Together with \eqref{eq:R-infty}, we conclude that $\alpha^*_{(j,i),\mathbf{x}} =\{\alpha^-\}$. 
\end{proof}

\begin{remark}
For the situation of $\alpha^*_{(j,i),\mathbf{x}}=\{+\infty, -\infty\}$ in Theorem~\ref{thm sol}, it means that the maximum value of $\tilde{R}_{(j,i),\mathbf{x}}$ is unattainable, but from \eqref{eq:R-infty} we do have 
$$
\lim_{\alpha \rightarrow \pm \infty}\tilde{R}_{(j,i),\mathbf{x}}(\alpha)=R(a\mathbf{e}_i),
$$  
for any $a\ne 0$.
\end{remark}

Now, we define $M_{(j,i),\mathbf x}$ as the maximum value of $\tilde{R}_{(j,i),\mathbf x}$ on $\mathbb{R}$ for 
$(j,i) \in \mathcal{S}({\mathbf{x}}) \times \mathcal{Z}({\mathbf{x}})$, that is, 
$$
M_{(j,i),\mathbf{x}}=\tilde{R}_{(j,i),\mathbf{x}}(\alpha^*_{(j,i),\mathbf{x}}),
$$ 
where $\alpha^*_{(j,i),\mathbf{x}}$ is understood as any arbitrary element in the maximum set given in \eqref{bestalpha}.  
Assuming each $\alpha_t$ solely depends on its corresponding index implies $\alpha_t$ is determined by $(j_t,i_t)$, leaving $(j_t,i_t)$ the only optimization variables. This allows us to employ a greedy method to tackle \eqref{prob:x_hat_r}. We present \textit{greedy support alteration} with pseudo-code summarized in Algorithm \ref{alg:greedy}. 
\begin{algorithm}[htbp]
	\caption{Greedy support alteration}	
	\begin{algorithmic}
		\label{alg:greedy}
		\STATE {\bfseries Input:} a solution $\mathbf x\in \mathbb{R}^n$ and $r$ index pairs to be swapped with $r \le \min\{\|\mathbf{x}\|_0,n-\|\mathbf{x}\|_0\}$.
		\STATE  {\bfseries Initialization:} $\tilde{\mathbf{x}}^{(0)}:=\mathbf x$. 
		\FORALL{$(j,i) \in \mathcal{S}({\mathbf x}) \times \mathcal{Z}({\mathbf x})$}
    	\STATE  Compute $\alpha^*_{(j,i),\mathbf{x}}$ and $M_{(j,i),\mathbf{x}}:=\tilde{R}_{(j,i),\mathbf{x}}(\alpha^*_{(j,i), \mathbf{x}})$.
		\ENDFOR
		\FOR{$t=1$ {\bfseries to} $r$}
    	\STATE $(j_t,i_t) := \mathop{\arg \max}_{(j,i) \in (\mathcal{S}({\mathbf x}) \backslash \{j_1,\dots,j_{t-1}\}) \times (\mathcal{Z}({\mathbf x}) \backslash \{i_1,\dots,i_{t-1}\})} M_{(j,i),\mathbf{x}}$.
    	\STATE $\tilde{\mathbf{x}}^{(t)}:=\begin{cases}
          \tilde{\mathbf{x}}^{(t-1)} - [\tilde{\mathbf{x}}^{(t-1)}]_{j_t} \mathbf{e}_{j_t} + \alpha_{(j_t,i_t),\tilde{\mathbf{x}}^{(t-1)}}^* \mathbf{e}_{i_t}&, \alpha_{(j_t,i_t),\tilde{\mathbf{x}}^{(t-1)}}^*\ne \pm\infty,\\
          \mathbf{e}_{i_t}&, \alpha_{(j_t,i_t),\tilde{\mathbf{x}}^{(t-1)}}^*=\pm\infty,\end{cases}.$

		\ENDFOR
		\STATE {\bfseries Output:} an support altered solution $\tilde{\mathbf x}^{(r)}$
	\end{algorithmic}
\end{algorithm}
The inputs of greedy support alteration are a vector $\mathbf{x}\in \mathbb{R}^n$ and the number of alterations $r$ to be acted on $\mathbf{x}$. It requires $r \le \min\{\|\mathbf{x}\|_0,n-\|\mathbf{x}\|_0\}$. The output of the algorithm is denoted by $\tilde{\mathbf x}^{(r)}$, the support altered vector of $\mathbf{x}$. Algorithm \ref{alg:greedy} has two sequential for-loops. In the first for-loop, we compute  $M_{(j,i),\mathbf{x}}=\tilde{R}_{(j,i),\mathbf{x}}(\alpha^*_{(j,i),\mathbf{x}})$, that is the objective value achieved by changing $[\mathbf{x}]_j$ to zero and $[\mathbf{x}]_i$ to $\alpha^*_{(j,i),\mathbf{x}}$ for $(j,i) \in \mathcal{S}({\mathbf{x}}) \times \mathcal{Z}({\mathbf{x}})$. In the second for-loop, we ideally look for an altered version of $\mathbf{x}$ by successively identifying $r$ indices to be moved out from $\mathcal{S}({\mathbf{x}})$ and $r$ indices from $\mathcal{Z}({\mathbf{x}})$ to be potentially moved into $\mathcal{S}(\tilde{\mathbf{x}})$. This is done through $r$ iterations. Each iteration identifies a pair of indices and generates an updated version of $\mathbf{x}$. The concrete process is as follows: suppose the identified index pairs are $(j_1,i_1),\dots,(j_{t-1},i_{t-1})$ in the previous $(t-1)$ iterations and $\tilde{\mathbf{x}}^{(t-1)}$ is the altered solution at the $(t-1)$-th iteration. The pair of indices at the $t$-th iteration $(j_t,i_t)$ is selected from $(\mathcal{S}({\mathbf x}) \backslash \{j_1,\dots,j_{t-1}\}) \times (\mathcal{Z}({\mathbf x}) \backslash \{i_1,\dots,i_{t-1}\})$ such that $M_{(j_t,i_t),\mathbf{x}}$ is the largest value among all possible choices. Then, the updated vector $\tilde{\mathbf{x}}^{(t)}$ from $\tilde{\mathbf{x}}^{(t-1)}$ is given as follows:
\begin{equation}\label{eq:twolevels}
\tilde{\mathbf{x}}^{(t)}:=\begin{cases}
          \tilde{\mathbf{x}}^{(t-1)} - [\tilde{\mathbf{x}}^{(t-1)}]_{j_t} \mathbf{e}_{j_t} + \alpha_{(j_t,i_t),\tilde{\mathbf{x}}^{(t-1)}}^* \mathbf{e}_{i_t}&, \alpha_{(j_t,i_t),\tilde{\mathbf{x}}^{(t-1)}}^*\ne \pm\infty,\\
          \mathbf{e}_{i_t}&, \alpha_{(j_t,i_t),\tilde{\mathbf{x}}^{(t-1)}}^*=\pm\infty,\end{cases}.
\end{equation}
Figure \ref{fig:greedy} illustrates this process for $r=2$. 
\begin{figure}[htbp]
    \centering
    \includegraphics[scale=0.7,trim=40 620 80 50]{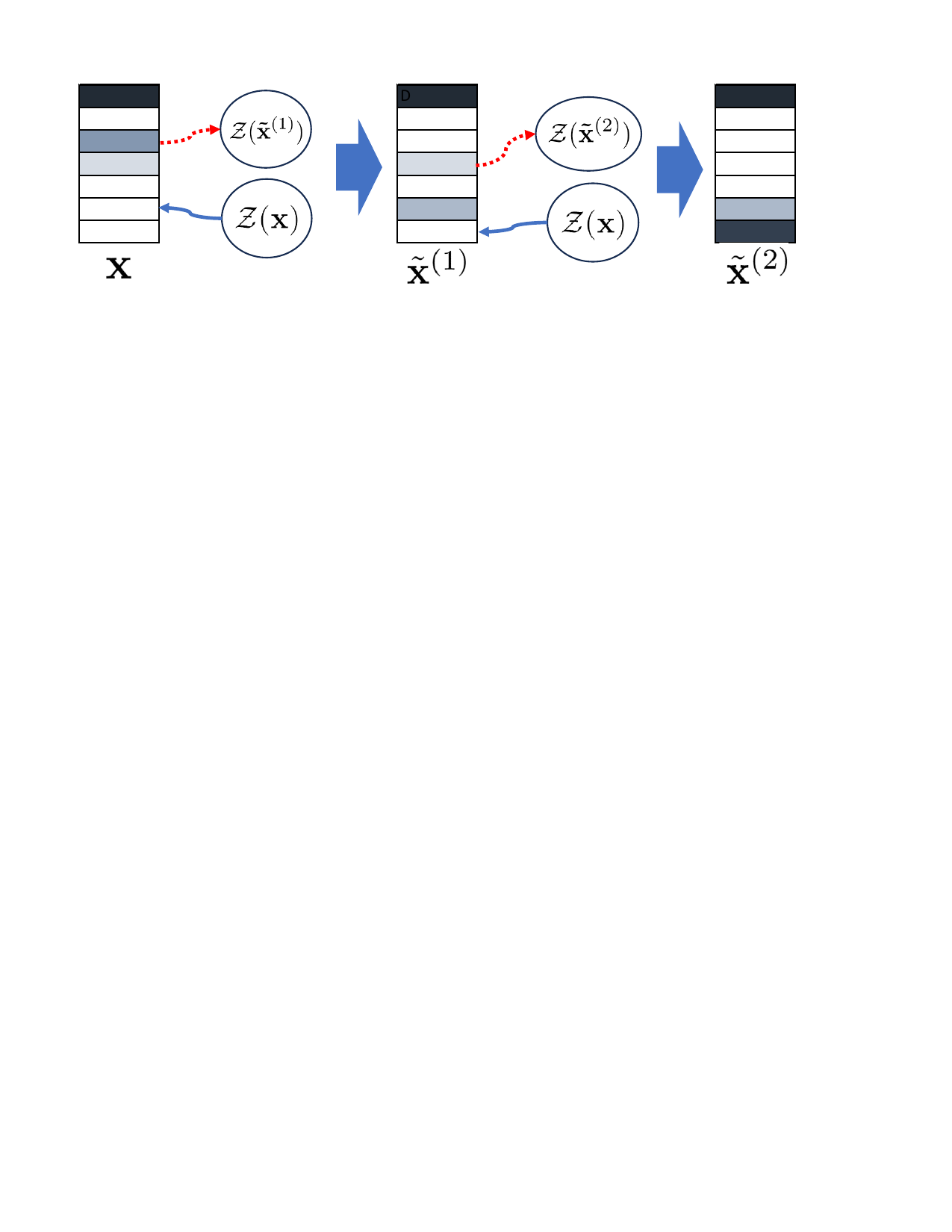}
    \caption{Illustration of greedy support alteration, where support alteration of $r=2$ is done by swapping one pair of out-of-support index and into-support index at a time.}
    \label{fig:greedy}
\end{figure}

The support alteration method given in Algorithm~\ref{alg:greedy} computes the pairs of indices with the largest values of $M_{(j,i),\mathbf x}$ from all possible $(j,i)$, which requires $s(n-s)$ executions of one-dimensional maximization procedures. We can further reduce the computational cost with the following frequently used idea that the smaller the absolute values of nonzero entries with indices in $\mathcal{S}({\mathbf x})$ are, the less important they are. Our second variant of support alteration, termed \textit{partial support alteration} and presented in Algorithm \ref{alg:Partial}, is stemmed from this idea. 
\begin{algorithm}[htbp]
	\caption[Partial]{(\texttt{SA}) Partial support alteration}	
	\begin{algorithmic}
		\label{alg:Partial}
		\STATE {\bfseries Input:} a solution $\mathbf x\in \mathbb{R}^n$ and $r$ index pairs to be swapped with $r \le \min\{\|\mathbf{x}\|_0,n-\|\mathbf{x}\|_0\}$
		\STATE {\bfseries Initialization:} $\tilde{\mathbf{x}}^{(0)}:=\mathbf x$. 
    	\STATE Find $\{j_1,\dots,j_r\}$ such that $0<|[\mathbf x]_{j_1}| \le |[\mathbf x]_{j_2}| \le ...\le|[\mathbf x]_{j_r}|\le |[\mathbf x]_j|$, $\forall j\in \mathcal{S}(\mathbf{x})\setminus \{j_1,\dots,j_r\}$.
		\FOR{$t=1$ {\bfseries to} $r$}
      	\STATE $i_t:=\mathop{\arg \max}_{i \in \mathcal{Z}({\mathbf x}) \backslash \{i_1,\dots,i_{t-1}\}} M_{j_t,i, \tilde{\mathbf{x}}^{(t-1)}}$.
      	\STATE $\tilde{\mathbf{x}}^{(t)}:=\begin{cases}
          \tilde{\mathbf{x}}^{(t-1)} - [\tilde{\mathbf{x}}^{(t-1)}]_{j_t} \mathbf{e}_{j_t} + \alpha_{(j_t,i_t),\tilde{\mathbf{x}}^{(t-1)}}^* \mathbf{e}_{i_t}&, \alpha_{(j_t,i_t),\tilde{\mathbf{x}}^{(t-1)}}^*\ne \pm\infty,\\
          \mathbf{e}_{i_t}&, \alpha_{(j_t,i_t),\tilde{\mathbf{x}}^{(t-1)}}^*=\pm\infty,\end{cases}.$
		\ENDFOR
		\STATE {\bfseries Output:} $\tilde{\mathbf x}^{(r)}:=\texttt{SA}(\mathbf{x}, r)$  the support altered solution of $\mathbf{x}$. 
	\end{algorithmic}
\end{algorithm}
The partial support alteration determines the set of out-of-support indices to be those $r$ entries of $\mathbf{x}$ with the smallest absolute values. It only considers a part of choices when selecting index pairs $(j,i)$, while the greedy support alteration considers all choices. In details, the partial support alteration first finds $r$ indices corresponding to the smallest $r$ nonzero entries in absolute value of $\mathbf{x}$, denoted by $j_1,\dots,j_r$, where $|[\mathbf x]_{j_1}| \le |[\mathbf x]_{j_2}| \le \ldots \le|[\mathbf x]_{j_r}|$. In the for-loop, it successively removes those indices from $\mathcal{S}({\mathbf x})$, and add potentially equal number of indices from $\mathcal{Z}({\mathbf x})$ into $\mathcal{S}({\mathbf x})$. This is done successively as follows. At the $t$-th iteration, the partial support alteration fixes $j_t$ as the index of the $t$-th smallest  nonzero entries of $\mathbf x$ in absolute value, and enumerates all into-support indices from the set $\mathcal{Z}({\mathbf x}) \backslash \{i_1,\dots,i_{t-1}\}$ to find the available into-support index corresponding to the largest objective value. The $t$-th altered $\tilde{\mathbf{x}}^{(t)}$ is updated using the formula~\eqref{eq:twolevels}. This process is visually illustrated in Figure \ref{fig:partial}.


\begin{figure}[htbp]
    \centering
    \includegraphics[scale=0.7,trim=40 620 80 50]{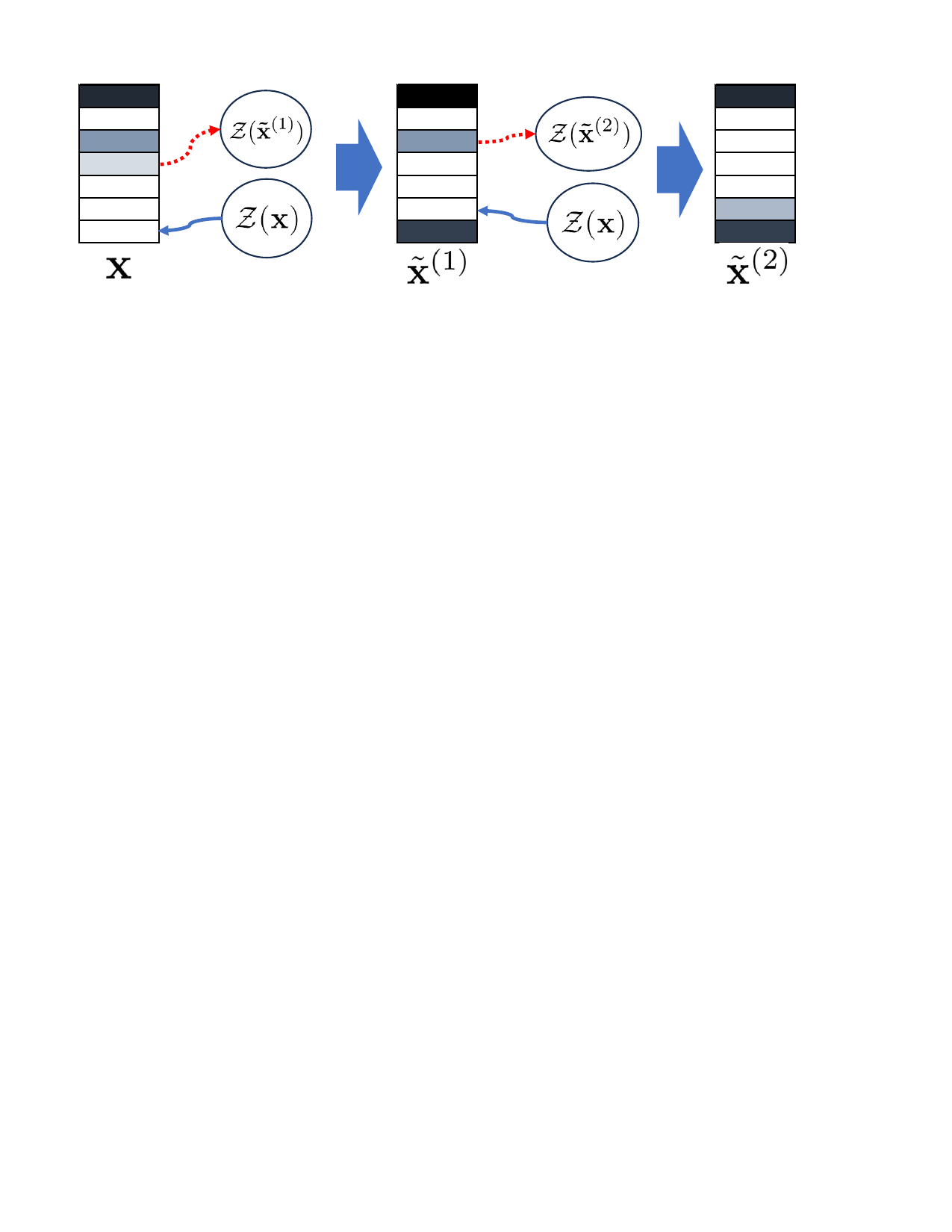}
    \caption{Illustration of partial support alteration. The darkness of the entries represent the absolute values of the entries in a vector. The indices corresponding to the smallest absolute values of the entries, represented by the lightest colors, are selected to be the out-of-support indices.}
    \label{fig:partial}
\end{figure}

As partial support alteration only considers a subset of the possible choices for index pairs, the number of required one-dimensional maximization procedures reduces. To count the number of one-dimensional optimizations needed, consider the $t$-th iteration of the for-loop in the partial support alteration, where it finds the into-support index in $\mathcal{Z}({\mathbf x}) \backslash \{i_1,\dots,i_{t-1}\}$ corresponding to the largest objective value. This requires solving a one-dimensional optimization problem \eqref{bestalpha} $|\mathcal{Z}({\mathbf x}) \backslash {i_1,\dots,i_{t-1}}|=n-s-(t-1)$ times. Therefore, the total number of one-dimensional optimizations to be solved for $t$ ranging from $1$ to $r$ is
$$
\sum_{t=1}^r (n-s-(t-1)) = r\left(n-s-\frac{r-1}{2}\right),
$$
which is less than $s(n-s)$ required by greedy support alteration.

\section{The Successive Two-Stage Framework with Self-Adaptive Number of Swapped Pairs}
\label{sec pair number}
The number of swapped pairs, denoted by $r$ in Algorithm \ref{alg:greedy} and \ref{alg:Partial}, is a key parameter in the support alteration and greatly affects the outcome of the successive two-stage framework for sGEP. In this section, we first propose a strategy to self-adaptively determine the number $r$ of swapped pairs for the support alteration in each iteration. Then, we present the whole algorithmic framework with such self-adaptive strategy and analyze its convergence property.
\subsection{A Self-Adaptive Strategy for Determining the Number of Swapped Pairs}  
This subsection is devoted to the self-adaptive strategy for determining the number of swapped pairs, i.e., $r$ in Algorithm \ref{alg:greedy} and Algorithm \ref{alg:Partial}. The spirit of this strategy comes from the backtracking strategy, a classical idea to adaptively adjust the step size of the underlying algorithm. In the following, we will first explain that the number of swapped pairs can be viewed as the step size of the support alteration when the $\ell_0$ norm is used to measure the distance between the output and input of the support alteration. To this end, we first show that the $\ell_0$ norm is a metric on $\mathbb{R}^n$ in the next proposition. Define $\tau: \mathbb{R}^n \times \mathbb{R}^n \rightarrow \mathbb{N}$ at $(\mathbf{x},\mathbf{y})\in \mathbb{R}^n\times \mathbb{R}^n$ as
$$
\tau(\mathbf{x},\mathbf{y}):=\|\mathbf{x}-\mathbf{y}\|_0.
$$ 
Immediately, we have the following result.
\begin{Proposition}
\label{prop metric}
$(\mathbb{R}^n, \tau)$ is a metric space, where $\tau$ is a metric on $\mathbb{R}^n$. 
\end{Proposition}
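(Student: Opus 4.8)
The plan is to verify directly that $\tau$ satisfies the four defining axioms of a metric: nonnegativity, the identity of indiscernibles, symmetry, and the triangle inequality. Three of these are immediate from elementary properties of the $\ell_0$ functional, so the real content lies entirely in the triangle inequality, which is where I would concentrate the argument.

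First I would dispose of the easy axioms. Nonnegativity of $\tau(\mathbf{x},\mathbf{y})=\|\mathbf{x}-\mathbf{y}\|_0$ holds because $\|\cdot\|_0$ counts nonzero entries and is therefore a nonnegative integer. For the identity of indiscernibles, observe that $\|\mathbf{x}-\mathbf{y}\|_0=0$ precisely when $\mathbf{x}-\mathbf{y}$ has no nonzero coordinates, i.e.\ $\mathbf{x}=\mathbf{y}$. Symmetry follows since negation does not change which coordinates vanish, so $\mathcal{S}(\mathbf{x}-\mathbf{y})=\mathcal{S}(\mathbf{y}-\mathbf{x})$ and hence $\|\mathbf{x}-\mathbf{y}\|_0=\|\mathbf{y}-\mathbf{x}\|_0$.

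The main step, and the only one requiring an idea, is the triangle inequality
\[
\|\mathbf{x}-\mathbf{z}\|_0 \le \|\mathbf{x}-\mathbf{y}\|_0 + \|\mathbf{y}-\mathbf{z}\|_0 .
\]
The approach I would take is a coordinatewise support-inclusion argument, exactly as for the Hamming distance. The key observation is that for any index $i$, if $[\mathbf{x}]_i \ne [\mathbf{z}]_i$ then it is impossible to have both $[\mathbf{x}]_i=[\mathbf{y}]_i$ and $[\mathbf{y}]_i=[\mathbf{z}]_i$, since these two equalities would force $[\mathbf{x}]_i=[\mathbf{z}]_i$. Consequently at least one of $[\mathbf{x}]_i\ne[\mathbf{y}]_i$ or $[\mathbf{y}]_i\ne[\mathbf{z}]_i$ holds, which yields the inclusion
\[
\mathcal{S}(\mathbf{x}-\mathbf{z}) \subseteq \mathcal{S}(\mathbf{x}-\mathbf{y}) \cup \mathcal{S}(\mathbf{y}-\mathbf{z}).
\]
Taking cardinalities and using subadditivity of $|\cdot|$ over unions then gives
\[
\|\mathbf{x}-\mathbf{z}\|_0 = |\mathcal{S}(\mathbf{x}-\mathbf{z})| \le |\mathcal{S}(\mathbf{x}-\mathbf{y})| + |\mathcal{S}(\mathbf{y}-\mathbf{z})| = \|\mathbf{x}-\mathbf{y}\|_0 + \|\mathbf{y}-\mathbf{z}\|_0,
\]
completing the verification.

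I do not anticipate a genuine obstacle here, since $(\mathbb{R}^n,\tau)$ is essentially the continuous analogue of Hamming space; the only place one must be slightly careful is to phrase the support inclusion in terms of coordinate \emph{values} rather than nonzero patterns, because $\tau$ compares differences of entries, not the entries themselves. Once the contrapositive of the coordinate observation is set up correctly, the triangle inequality follows at once and the remaining axioms are routine.
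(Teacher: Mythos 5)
Your proposal is correct and follows the same route as the paper: direct verification of the metric axioms, with nonnegativity, identity of indiscernibles, and symmetry read off from elementary properties of $\|\cdot\|_0$. The only difference is that the paper asserts the triangle inequality $\|\mathbf{x}-\mathbf{y}\|_0 \le \|\mathbf{x}-\mathbf{z}\|_0 + \|\mathbf{z}-\mathbf{y}\|_0$ as a known fact, whereas you actually prove it via the support inclusion $\mathcal{S}(\mathbf{x}-\mathbf{z}) \subseteq \mathcal{S}(\mathbf{x}-\mathbf{y}) \cup \mathcal{S}(\mathbf{y}-\mathbf{z})$, which supplies the one detail the paper leaves implicit.
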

\begin{proof} \ \ The proof has three parts. 

(i) For all $\mathbf{x}$ and $\mathbf{y}$ in $\mathbb{R}^n$, $\tau(\mathbf{x},\mathbf{y}) \ge 0$ and  $\tau(\mathbf{x},\mathbf{y}) = 0$ if and only if $\mathbf{x}=\mathbf{y}$. It is true from the property of the $\ell_0$ norm. 

(ii)  For all $\mathbf{x}$ and $\mathbf{y}$ in $\mathbb{R}^n$, $\tau(\mathbf{x},\mathbf{y}) = \tau(\mathbf{y},\mathbf{x})$. This statement is a direct consequence of the definition of $\tau$. 

(iii) For all $\mathbf{x}$, $\mathbf{y}$, and $\mathbf{z}$ in $\mathbb{R}^n$, $\tau(\mathbf{x},\mathbf{y}) \le \tau(\mathbf{x},\mathbf{z}) +\tau(\mathbf{z},\mathbf{y})$. Actually, this triangle inequality comes from the fact that $\|\mathbf{x}-\mathbf{y}\|_0 \le \|\mathbf{x}-\mathbf{z}\|_0 +\|\mathbf{z}-\mathbf{y}\|_0$.

Therefore, $\tau$ is a metric on $\mathbb{R}^n$, which completes the proof of the result. 
\end{proof}

The metric $\tau$ can help us to understand and determine the parameter $r$. Consider an iterative algorithm in optimization, 
let $\mathbf{x}^{(t)}$ be the iterate in the $t$-th iteration, the next iterate is 
$\mathbf{x}^{(t)}:=\mathbf{x}^{(t-1)}+\alpha_t \mathbf{d}_t$, where  $\mathbf{d}_t$ is a unit direction vector measured by the $\ell_2$ norm. The quantity $\alpha_t=\|\mathbf{x}^{(t)}-\mathbf{x}^{(t-1)}\|_2$ is called the step size of this algorithm. By replacing the $\ell_2$ norm with  the $\ell_0$ norm, $\tau(\mathbf{x},\hat{\mathbf{x}})=\|\mathbf{x}-\hat{\mathbf{x}}\|_0$ can be viewed as the step size of a support alteration procedure, where $\mathbf{x}$ and $\hat{\mathbf{x}}$ are the input and output of the support alteration, respectively. Suppose $\|\mathbf{x}\|_0=\|\hat{\mathbf{x}}\|_0$, then we immediately have $\tau(\mathbf{x},\hat{\mathbf{x}})= 2r$. Hence we can also view $r$ as the step size of the support alteration. 


We now propose an adaptive approach to find the suitable step size for the support alteration. The pseudo-code of this approach is displayed in Algorithm \ref{alg:stepsize}. In this code, $\texttt{GBA}(\cdot)$ denotes a gradient-based algorithm, such as Algorithm \ref{alg:PGSA}, and $\texttt{SA}(\mathbf{x},r)$ is the resulting vector of swapping $r$ index pairs of $\mathbf{x}$ by a support alteration, such as Algorithm \ref{alg:Partial}. For simplicity, we omit the arguments $\mathbf{A}$ and $\mathbf{B}$ of the function $R(\cdot)$ since they are known in the context. Algorithm \ref{alg:stepsize} has two inputs, a vector $\mathbf{x}$ to be altered and a number $\hat{r}$, which is the maximum to start from for a support alteration. The algorithm determines a step size $r$ for the support alteration. This $r$ is the maximum number not exceeding $\hat{r}$ ensuring that the support alteration of the input vector $\mathbf{x}$ results in an improvement in the sense that $R(\texttt{GBA}(\texttt{SA}(\mathbf{x},r))) > R(\mathbf{x})$.


\begin{algorithm}[htb]
	\caption{(\texttt{SS}) A self-adaptive strategy to determine the Step Size for support alteration}	
	\begin{algorithmic}
		\label{alg:stepsize}
		\STATE {\bfseries Input:} $\mathbf{x}$ a vector to be altered; $\hat{r}$ the maximum number to start from.
        \STATE {\bfseries Initialization:} $r:=\hat{r}$.
        \WHILE{$r\ne 0$ and $R(\texttt{GBA}(\texttt{SA}(\mathbf{x},r))) \le R(\mathbf{x})$}
        \STATE $r:=r-1$.
        \ENDWHILE
        \STATE {\bfseries Output:} $r=\texttt{SS}(\mathbf{x},\hat{r})$ the determined step size for support alteration.
	\end{algorithmic}
\end{algorithm}
\subsection{Convergent Successive Two-Stage Framework with Self-Adaptive Number of Swapped Pairs}
In this subsection, we present our successive two-stage algorithm with the self-adaptive step size in Algorithm \ref{alg:proposed} and analyze its convergence property. 

\begin{algorithm}[htb]
	\caption{(\texttt{SA}\_\texttt{GBA}) A successive two-stage algorithm with self-adaptive number of swapped pairs}	
	\begin{algorithmic}\label{alg:proposed}
		\STATE {\bfseries Input:} $\mathbf{A} \in \mathbb{S}_+^n$, $\mathbf{B} \in \mathbb{S}_{++}^n$, and sparsity level $s$, a gradient-based algorithm $\texttt{GBA}(\cdot)$.
		\STATE {\bfseries Initialization:} $t:=0$. Find and denote by $\hat{\mathbf{x}}^{(0)}$, an initial point for $\texttt{GBA}(\cdot)$.
		\REPEAT
            \STATE $t:=t+1$.
            \STATE Stage-1: $\mathbf{x}^{(t)}:=\texttt{GBA}({\hat{\mathbf{x}}}^{(t-1)})$. 
            \STATE Stage-2: 
            $\hat{r}_t:=\begin{cases}
                \min\{\|\mathbf{x}^{(1)}\|_0,n-\|\mathbf{x}^{(1)}\|_0\}, & t=1,\\
                \min\{r_{t-1}-1,\|\mathbf{x}^{(t)}\|_0,n-\|\mathbf{x}^{(t)}\|_0\}, & t \ge 2.
            \end{cases}$\\
            $r_t:=\texttt{SS}(\mathbf{x}^{(t)},\hat{r}_t)$.\\
            ${\hat{\mathbf{x}}}^{(t)}:=\texttt{SA}(\mathbf{x}^{(t)},r_t)$. 
		\UNTIL {$r_t=0$}
		\STATE {\bfseries Output:} $\mathbf{x}^{(t)}$
	\end{algorithmic}
\end{algorithm}

We give some remarks on this algorithm. First, recall that the operator $\texttt{SA}(\cdot)$ denotes a call to the partial support alteration Algorithm~\ref{alg:Partial} while the operator $\texttt{SS}(\cdot)$  denotes a call to Algorithm~\ref{alg:stepsize} for finding the step size of the support alteration. Second, we will rename Algorithm \ref{alg:proposed} to reflect the chosen \texttt{GBA} in the section of numerical experiments. For example, for the two given \texttt{GBA}s, TPM and PGSA\_ML, in Section~\ref{sec framework}, Algorithm \ref{alg:proposed} is renamed as SA\_TPM (or SA\_PGSA\_ML) if \texttt{GBA} is TPM (or PGSA\_ML). 

The idea behind Algorithm \ref{alg:proposed} is motivated from both diminishing step size strategy and backtracking strategy, two classical step size strategies measured in the $\ell_2$ norm. The diminishing step size strategy gradually reduces the step size as the algorithm progresses. Utilizing this idea in our case, we first initialize the step size, $r_1$, as the maximum feasible value, $r_1 := \min\{\|\mathbf{x}^{(1)}\|_0, n-\|\mathbf{x}^{(1)}\|_0\}$. In the $t$-th iteration ($t\ge 2$) of Algorithm \ref{alg:proposed}, the searching step size starts from $\min\{r_{t-1}-1,\|\mathbf{x}^{(t)}\|_0,n-\|\mathbf{x}^{(t)}\|_0\}$ making sure $r_t\le r_{t-1}-1$. Hence, the step size sequence obtained in Algorithm \ref{alg:proposed}, denoted by $\{r_t\}$, strictly decreases to 0. 

The backtracking strategy, typically used to find a suitable step size ensuring strictly increasing objective values, is adopted in our algorithm with modifications. In the classical backtracking strategy for iterative algorithms, the criterion to decide if backtracking is needed is to simply compare the objective values. However, in Algorithm \ref{alg:stepsize} the criterion to decide whether backtracking is needed is to check if $R(\texttt{GBA}(\texttt{SA}(\mathbf{x},r))) > R(\mathbf{x})$, which means if the objective value achieved by a gradient-based algorithm using the altered vector as the initial point is higher than the objective value obtained by the unaltered one. 
There are two primary motivations for this modification. Firstly, although the altered solution, represented by $\texttt{SA}(\mathbf{x},r)$, may exhibit a favorable support, it might not necessarily reflect the optimal entry values within that support. Given that the input vector $\mathbf{x}$ is a stationary point of \eqref{sGEP}, $\mathbf{x}$ usually contains optimal or near-optimal values for its support. Hence, our objective is to enhance the values within the altered support for a more balanced comparison. Employing a gradient-based algorithm, symbolized as $\texttt{GBA}(\cdot)$, offers a cost-efficient method to refine those values.
Secondly, the primary objective of support alteration is to identify an improved starting point for a gradient-based algorithm, rather than to directly yield an enhanced solution. Since the chosen gradient-based algorithm is monotonic, the criterion $R(\texttt{GBA}(\texttt{SA}(\mathbf{x},r))) > R(\mathbf{x})$ is more likely to be satisfied than $R(\texttt{SA}(\mathbf{x},r)) > R(\mathbf{x})$. Note that the chosen step size is the largest number that satisfies the criterion. This adjustment favors a larger support alteration step size than conventional backtracking. As a result, the distance between $\texttt{SA}(\mathbf{x},r)$ and $\mathbf{x}$ increases, enhancing the likelihood of escaping local minima.

The properties of Algorithm \ref{alg:proposed} are listed in the following theorem.
\begin{theorem}
Let $\{\mathbf{x}^{(t)}\}$ be the sequence generated by Algorithm \ref{alg:proposed}. The following statements about Algorithm \ref{alg:proposed} hold.
\begin{enumerate}[label=(\roman*)]
    \item The sequence of objective values $\{R(\mathbf{x}^{(t)})\}$ strictly increases until halting.
    \item The algorithm stops in at most $s$ iterations.
\end{enumerate}
\end{theorem}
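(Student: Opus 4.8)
The plan is to derive both claims directly from the control flow of Algorithm~\ref{alg:proposed} together with the exit behavior of the subroutine \texttt{SS} (Algorithm~\ref{alg:stepsize}); neither part should require a quantitative estimate beyond integer bookkeeping, so the real work is in parsing the nested logic correctly.

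For statement (i), the crux is to understand what a nonzero output of \texttt{SS} certifies. The while-loop in Algorithm~\ref{alg:stepsize} continues precisely while $r\neq 0$ \emph{and} $R(\texttt{GBA}(\texttt{SA}(\mathbf{x},r)))\le R(\mathbf{x})$; hence if it terminates with $r_t:=\texttt{SS}(\mathbf{x}^{(t)},\hat r_t)\neq 0$, the continuation condition must have failed while $r_t\neq 0$, forcing $R(\texttt{GBA}(\texttt{SA}(\mathbf{x}^{(t)},r_t)))> R(\mathbf{x}^{(t)})$. I would then substitute the Stage-2 assignment $\hat{\mathbf{x}}^{(t)}:=\texttt{SA}(\mathbf{x}^{(t)},r_t)$ and the next Stage-1 assignment $\mathbf{x}^{(t+1)}:=\texttt{GBA}(\hat{\mathbf{x}}^{(t)})$ to obtain $R(\mathbf{x}^{(t+1)})=R(\texttt{GBA}(\texttt{SA}(\mathbf{x}^{(t)},r_t)))> R(\mathbf{x}^{(t)})$. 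Since the algorithm halts exactly when $r_t=0$, every iteration preceding the halt has $r_t\neq 0$, so $R(\mathbf{x}^{(1)})<R(\mathbf{x}^{(2)})<\cdots$ is strictly increasing up to the terminal iterate. It is worth noting that this argument does not even invoke the monotonicity of \texttt{GBA}, because the strict improvement is already encoded in the acceptance test of \texttt{SS}.

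For statement (ii), I would track the step-size sequence $\{r_t\}$ and show it is a strictly decreasing sequence of nonnegative integers whose first term is at most $s$. Because $\mathbf{x}^{(1)}=\texttt{GBA}(\hat{\mathbf{x}}^{(0)})$ is feasible for \eqref{sGEP}, we have $\|\mathbf{x}^{(1)}\|_0\le s$, whence $r_1\le\hat r_1=\min\{\|\mathbf{x}^{(1)}\|_0,n-\|\mathbf{x}^{(1)}\|_0\}\le s$. For $t\ge 2$, the definition $\hat r_t=\min\{r_{t-1}-1,\|\mathbf{x}^{(t)}\|_0,n-\|\mathbf{x}^{(t)}\|_0\}$ gives $\hat r_t\le r_{t-1}-1$, and since \texttt{SS} never returns more than its seed, $r_t\le\hat r_t\le r_{t-1}-1$, i.e.\ $r_t<r_{t-1}$. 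The positive terms of $\{r_t\}$ are therefore distinct integers lying in $\{1,\dots,s\}$, of which there are at most $s$; as the stopping criterion is exactly $r_t=0$, the algorithm must halt within $s$ iterations.

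The only genuine care in this proof lies in correctly parsing the subroutine logic rather than in any hard estimate: specifically, distinguishing the two ways the while-loop in \texttt{SS} can exit --- reaching $r=0$ versus achieving strict improvement --- since only the latter yields the inequality driving part (i). A secondary point is to count iterations consistently with the indexing in Algorithm~\ref{alg:proposed}, verifying that the strict decrement $r_t\le r_{t-1}-1$ together with $r_1\le s$ is tallied against the stopping check in the intended way.
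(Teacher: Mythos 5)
Your proof is correct and follows essentially the same route as the paper's: part (i) is exactly the paper's observation that a nonzero return value of \texttt{SS} certifies $R(\texttt{GBA}(\texttt{SA}(\mathbf{x}^{(t)},r_t)))>R(\mathbf{x}^{(t)})$, which is precisely $R(\mathbf{x}^{(t+1)})>R(\mathbf{x}^{(t)})$, and part (ii) is the paper's telescoping bound $0\le r_t\le r_{t-1}-1\le\cdots$ with $r_1\le\|\mathbf{x}^{(1)}\|_0\le s$, recast as a count of distinct positive integers. If anything, your bookkeeping in (ii) is slightly tidier than the paper's displayed chain (which writes $r_1-t$ where the recursion actually gives $r_1-(t-1)$), but the substance of both parts is identical to the paper's proof.
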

\begin{proof}
Suppose $t$ is an iteration number.

(i) 
Recall the description of Algorithm \ref{alg:proposed} and the stopping criterion of Algorithm \ref{alg:stepsize}.
If $r_t>0$, then 
\[R(\mathbf{x}^{(t+1)})=R(\texttt{GBA}(\hat{\mathbf{x}}^{(t)}))=R(\texttt{GBA}(\texttt{SA}(\mathbf{x}^{(t)},r_t)))>R(\mathbf{x}^{(t)}).\] 
If $r_t=0$, from the description of Algorithm \ref{alg:proposed}, Algorithm \ref{alg:proposed} halts.

(ii)  From the descriptions of Algorithm \ref{alg:stepsize} and Algorithm \ref{alg:proposed}, we have
    \[0\le r_t\le r_{t-1}-1\le \cdots \le r_1-t\le \|\mathbf{x}^{(1)}\|_0-t\le s-t.\]
    Hence, an iteration number must satisfy  $t\le s$.
\end{proof}

To close this section, we underscore the distinct advantages of our approach over coordinate-wise algorithms by drawing comparisons across four key aspects:

\textit{Swapping Multiple Entry Pairs:} Unlike coordinate-wise algorithms, which only swap one pair at a time \cite{beck2013sparsity,beck2016sparse}, our method is capable of swapping multiple entry pairs in a single step. As we elucidate in Section \ref{sec pair number}, this strategy can increase the $\ell_0$ distance between the input and the output. This, in turn, augments the likelihood of the solution escaping a local optimum.

\textit{Optimal Value Assignment:} Instead of merely assigning the swapped-out value to the swapped-in entry as done in coordinate-wise algorithms, our technique assigns an approximate optimal value. This proves to be a more precise approximation, especially when multiple entry pairs are swapped.

\textit{Gradient-based Optimization:} Our approach adopts a gradient-based method in its initial stage. This method not only improves the objective value but also increases the sparsity level of the iterates to $s$. This feature is absent in coordinate-wise algorithms.

\textit{Scope:} While our method is tailored to solve the sGEP, coordinate-wise algorithms are limited to sPCA.

\section{Experiments} \label{sec experiment}

Various algorithms have been provided for sGEP and some of them have been reviewed in the introduction section. In this section, these different methods are compared with our proposed algorithm. For sPCA, our algorithm is denoted by SA\_TPM representing employ TPM as our $\texttt{GBA}$ in the first stage. The  compared algorithms are TPM \cite{yuan2013truncated}, PathSPCA \cite{d2008optimal} (denoted by Path), GCW, and PCW \cite{beck2016sparse}. For general sGEP cases, our algorithms are denoted by SA\_PGSA\_ML, SA\_rifle respectively. The compared algorithms are rifle \cite{tan2018sparse}, PGSA\_ML \cite{zhang2022first}, IFTRR \cite{cai2020inverse}, IRQM with logarithm surrogate \cite{song2015sparse} (denoted by IRQM\_log), and DEC \cite{yuan2019decomposition}. For a fair comparison, we also aim to maximize the performance of some of the compared algorithms that solve the penalized formulation of sGEP:
\begin{equation}
	\max_{\mathbf{x} \in \mathbb{R}^n}\{\mathbf{x}^\top \mathbf{A} \mathbf{x} - \rho \|\mathbf{x}\|_0: \mathbf{x}^\top \mathbf{B} \mathbf{x} = 1\}, \label{penalized}
\end{equation} such as IRQM \cite{song2015sparse}. Those algorithms require an appropriate hyperparameter $\rho$ to obtain the desirable sparsity $s$. The parameters $\rho$ and $s$ are unknown. To maximize the performance of the compared algorithms solving problem \eqref{penalized}, these algorithms are applied many times to search the suitable $\rho^*$ for the desired $s^*$, and finally use those correspondence in experiments. 

All experiments were performed in MATLAB on a PC with an i9-13900K CPU and 32GB RAM. The experiments consist of several special cases and applications of sGEP, including sPCA, sparse recovery analysis, sFDA for classifications, sCCA, and sSIR.


\subsection{Sparse Principal Component Analysis}
The sPCA aims to obtain the sparse direction that maximizes the variance of the data after being projected along this direction.  Let $\mathbf{x}_{sPCA}$ be the sparse eigenvector extracted by sPCA algorithms and let $\lambda_{\mathbf{A}}^{\max}$ be the largest eigenvalue of the sample covariance matrix $\mathbf{A}$. We use the proportion of explained variance as a measure of comparison, which is defined as
\[\left(\frac{\mathbf{x}_{sPCA}^\top \mathbf{A} \mathbf{x}_{sPCA}}{\mathbf{x}_{sPCA}^\top\mathbf{x}_{sPCA}}\right) / \lambda_{\mathbf{A}}^{\max}. \] The proportion of explained variance is a commonly used measure in sPCA. A model with high explained variance will have good predictive power. And the proportion of explained variance is directly proportional to the corresponding Rayleigh quotient. The competing algorithms and our proposed algorithm are tested on two different types of data sets, one is the Pitprops real data set \cite{jeffers1967two} and the other is the Gaussian random data set. The Pitprops data is a correlation matrix that was calculated from 180 observations. There are 13 explanatory variables. Results up to sparsity level of 12 are shown in Figure \ref{fig:pitprops}. All the algorithms perform similarly when the sparsity level is within the ranges $[1, 3]$ and $[7, 12]$. Our algorithm, SA\_TPM, maintains the highest objective value among all competing algorithms for all sparsity. It provides a solid improvement from TPM for sparsity in the range $[4,6]$, and also outperforms its counterparts, GCW and PCW.

\begin{figure}[htb]
	\centering
	\includegraphics[scale=0.55,trim=0 0 0 0]{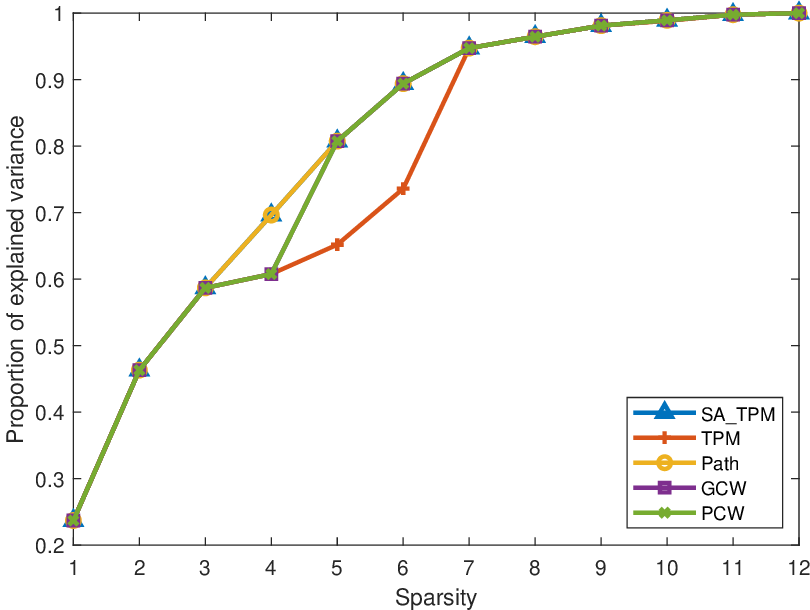}
	\caption{Proportion of explained variance vs sparsity for ``Pitprops" data set.}
    \label{fig:pitprops}
\end{figure}

The random data set is generated as follows: First, we generate a random data matrix $\mathbf{D} \in \mathbb{R}^{m \times n}$, $[\mathbf{D}]_{i,j} \sim \mathcal{N}(0,1)$. Then, let $\mathbf{A}$ be the covariance matrix corresponding to $\mathbf{D}$. The number of samples is set to be $m=300$. Results up to sparsity level of 400 are shown in Figure \ref{fig:pev_gaussian} (left) for the number of variables $n=5000$ and in Figure \ref{fig:pev_gaussian} (right) for the number of variables $n=20000$. The result shown in Figure \ref{fig:pev_gaussian} is the average result from 100 different data sets. 
\begin{figure}[htb]
	\centering
    \includegraphics[scale=0.55,trim=10 30 0 0]{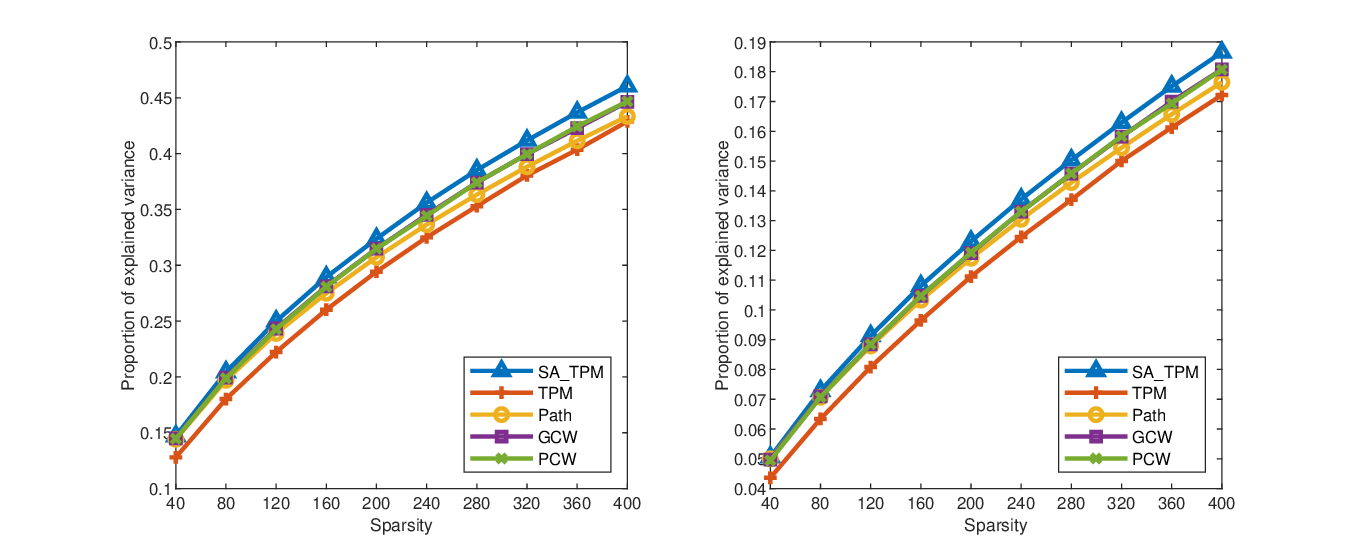}
	\caption{Proportion of explained variance vs sparsity on Gaussian data set with $n=5000$ (left) and $n=20000$ (right).  100 simulations are used to produce all results.}
	\label{fig:pev_gaussian}
\end{figure}
In both cases, where the number of variables varies, our method significantly improves the objective value compared to that produced by TPM, and also excels compared to competing algorithms.

For the Gaussian random data sets used in Figure~\ref{fig:pev_gaussian},  the computational time comparing the various algorithms for the different levels of sparsity is displayed in Figure \ref{fig:time} (left) for $n=5000$ and in Figure \ref{fig:time} (right) for $n=20000$.   
\begin{figure}[htb]
	\centering
    \includegraphics[scale=0.47,trim=10 30 0 0]{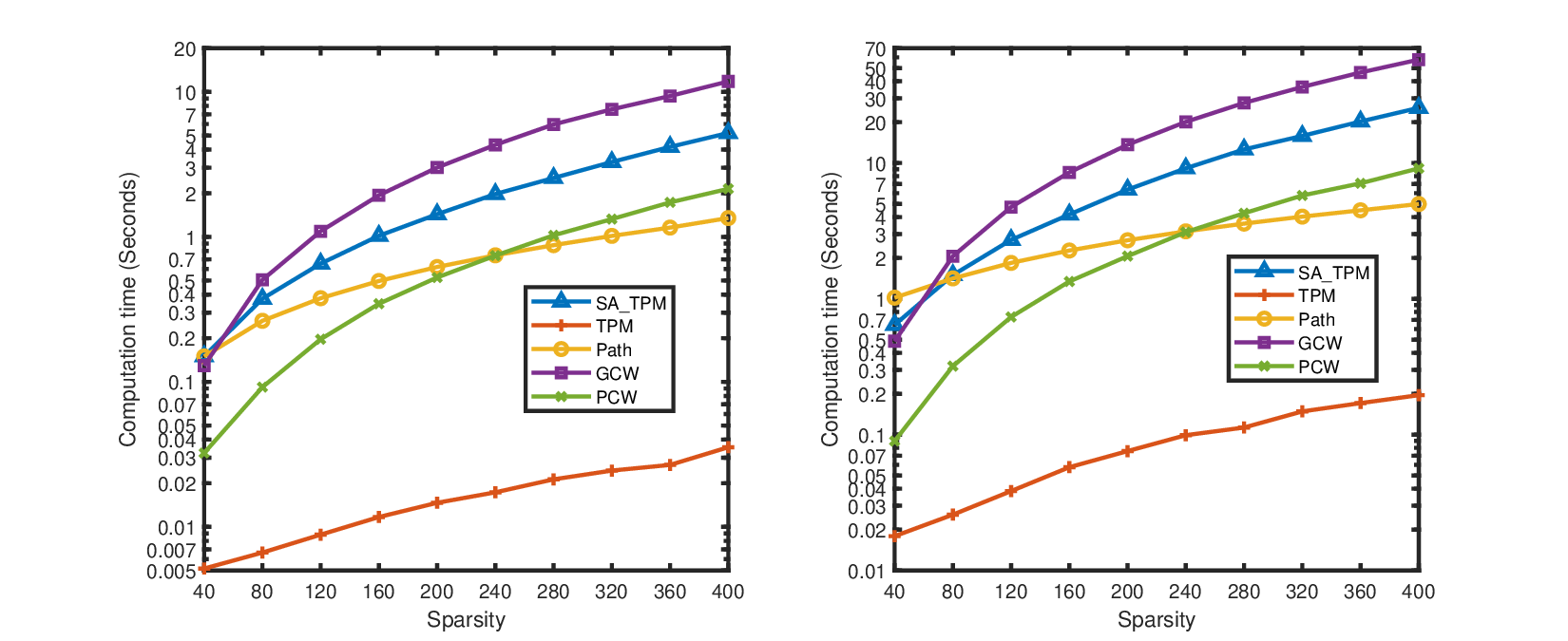}
	\caption{Computational Time vs sparsity on Gaussian data set with $n=5000$ (left) and $n=20000$ (right).}
	\label{fig:time}
\end{figure}
The computation time of SA\_TPM ranges from about 0.1s ($s=40,n=5000$) to about 25s ($s=400,n=20000$), which is typically above the one of PCW, but less than that of GCW.
Figure~\ref{fig:time_dim} displays the computational time of our proposed algorithm applied to the Gaussian data set described above, but with varying dimensions. 
\begin{figure}[ht]
	\centering
    \includegraphics[scale=0.6,trim=0 0 0 0]{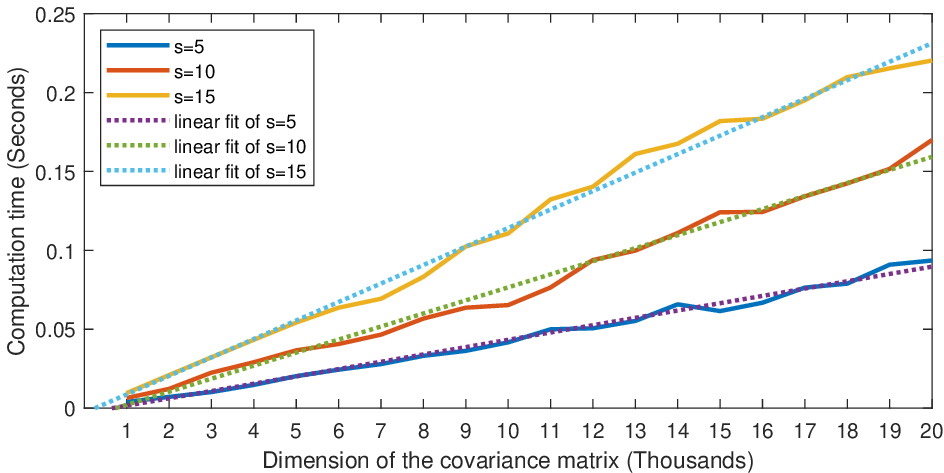}
	\caption{Computation Time vs dimension on Gaussian data set. 100 simulations are used to produce all results.}
	\label{fig:time_dim}
\end{figure}
Figure \ref{fig:time_dim} shows that our method is able to converge within 0.1s for high-dimensional ($n=20000$) cases with low sparsity cases ($s=5$). Dash lines in Figure \ref{fig:time_dim} are linear fittings of cases with fixed sparsities but varying dimensions. Linear regressions show that the computation time of our method increases linearly with respect to dimensions. In summary, the computation time of our method increases rapidly with respect to sparsity $s$, but only increases linearly with respect to dimension $n$. This aligns with the conclusion drawn in Section \ref{sec:existence}, that the complexity of sGEP increases more quickly with respect to $s$ than it does with respect to $n$. 

\subsection{Sparse Recovery Analysis}
We consider here the sparse spike model with Gaussian noise. The sparse spike model in \cite{shen2008sparse} aims to generate random data with the covariance matrix consisting of a sparse dominant eigenvector. Our model is slightly different in that the observed data is additionally tainted by Gaussian noise. The covariance matrix is synthesized through an eigendecomposition $\bm\Sigma=\mathbf{V} \bm\Lambda \mathbf{V}^{\top}$, where the first column of $\mathbf{V}\in \mathbb{R}^{n \times n}$ is a specified sparse unit vector. The ground truth data matrix $\mathbf{X} \in \mathbb{R}^{m \times n}$ is generated by drawing $m$ samples according to the zero-mean normal distribution with covariance matrix $\bm\Sigma$. Then the observed data matrix $\tilde{\mathbf{X}}$ equals to $\mathbf{X}$ plus i.i.d Gaussian noise with standard deviation $\sigma$, that is $\tilde{\mathbf{X}}=\mathbf{X}+\mathbf{N}$, where $[\mathbf{N}]_{i,j} \sim \mathcal N(0,\sigma^2)$ for all $i \in \mathbb{N}_m^+$, $j\in \mathbb{N}_n^+$. The dominant eigenvector of the ground truth covariance matrix is a random sparse eigenvector:
\begin{equation}
	[\mathbf{v}_1]_i=\begin{cases}\frac{1}{\sqrt{s}}, &i=r_1,...,r_{s}\\0, &\text{otherwise}\end{cases},
\end{equation}
where $r_1,...,r_{s} \in \mathbb{N}_n^+$ are chosen arbitrarily. The eigenvectors $ \mathbf{v}_j $ for $ j \ge 2 $ are chosen as random unit vectors, but are orthogonal to the previous eigenvectors, ensuring that $ \mathbf{V} $ remains an orthogonal matrix. The eigenvalues are fixed as the following values:
\begin{equation}
	\begin{cases}\lambda_1=15,\\ \lambda_j=1, &j=2,\ldots,n.\end{cases}
\end{equation}
Consider the setup with $n=500, \ m=50, \ s=10$. We generate 1000 different matrices and compare the average support recovery rate of each algorithm. Let $\mathbf{u}_1 \in \mathbb{R}^{n}$ be the output solutions of each algorithms, and the support recovery rate in each experiment is computed by $|\mathcal{S}({\mathbf{u}_1}) - \mathcal{S}({\mathbf{v}_1})|/s$. Figure \ref{fig:recovery} illustrates the results.
\begin{figure}[htb]
	\centering
	\includegraphics[scale=0.55,trim=0 0 0 0]{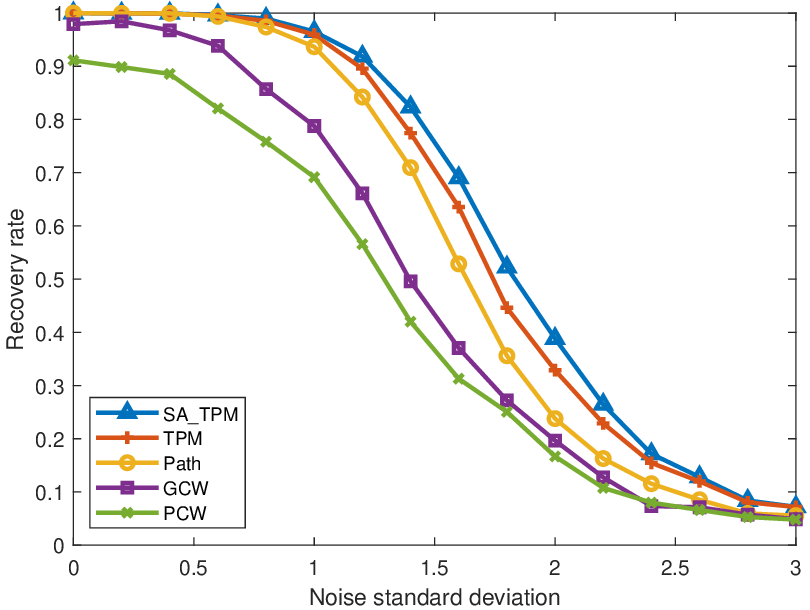}
	\caption{Recovery rate vs noise level in sparse spiked model}
    \label{fig:recovery}
\end{figure}

Our algorithm, SA\_TPM, consistently excels among all competing algorithms. SA\_TPM significantly outperforms GCW and PCW under all noise levels. This is probably due to the fact that the iterations of our algorithm are all enhanced by TPM, considering TPM performs well in sparse recovery setting.
Note that the goal of maximizing the Rayleigh quotient and that of recovering the ground truth sparse eigenvector are generally different. A method that excels in maximizing the Rayleigh quotient value may perform poorly in sparse recovery. However, our method demonstrates excellent recovery rates as well as obtaining high Rayleigh quotient value.

\subsection{Sparse Fisher Discriminant Analysis}
We consider two-class classification problems for both a real data set and a simulated data set. Each data set is further divided into disjoint training and test sets. Each of the algorithm being compared performs an sFDA and use the output to train a classifier with varying sparsity on the training sets. Then we calculate and compare the misclassification rate on the test set achieved by each algorithm. 

Our simulated data set, which is similar to that in \cite{tan2018sparse}, is specified as follows: The mean vector of one class is $\bm \mu_1 =\mathbf{0}$ and the mean vector of the other class is $\bm \mu_2$ with $[\bm\mu_2]_j =0.5$ for $j={2,4,...,40}$ and $[\bm\mu_2]_j =0$ otherwise. Both classes follow the same covariance matrix $\bm\Sigma$. The $\bm\Sigma$ is a block diagonal covariance matrix with five blocks, each of dimension $n/5 \times n/5$. The $(j,j')$-th element of each block takes value $0.8^{|j-j'|}$. This covariance structure is intended to mimic the covariance structure of gene expression data. 
The data are simulated as $\mathbf{x}_k \sim \mathcal{N}(\bm\mu_k,\bm\Sigma), \ k=1,2$. In each simulation, there are 500 training samples and 500 test samples for both classes respectively. We consider the case $n=1000$ and the sparsity levels of each trial are $6,10,\ldots,50$. The results on the simulated data set are averaged over 100 independent trials and are presented below in Figure \ref{fig:rate_simulated}. Our algorithms include SA\_PGSA\_ML and SA\_rifle. They both gain significant improvement from PGSA\_ML and rifle respectively. Additionally, SA\_PGSA\_ML consistently outperforms among all competing algorithms across all sparsity levels.
\begin{figure}[htb]
	\centering
	\includegraphics[scale=0.55,trim=0 0 0 0]{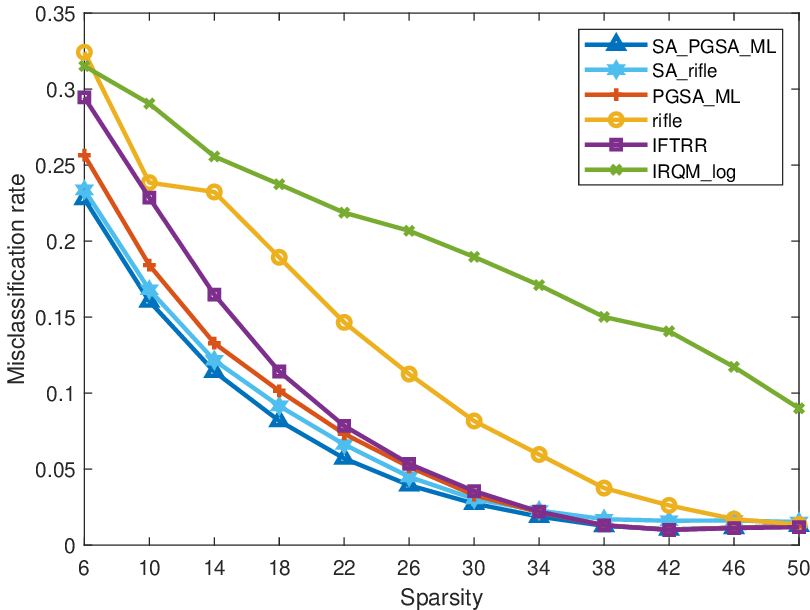}
	\caption{Misclassification rate on test set vs sparsity on simulated data}
    \label{fig:rate_simulated}
\end{figure}

The next sFDA experiment uses a real leukemia data set, which consists of 7129 gene expression measurements from two classes--25 acute myeloid leukemia (AML) patients and 47 acute lymphoblastic leukemia (ALL) patients. Each of the competing algorithms solves an sFDA with a sparsity level of $s=5$ to train classifiers. In each simulation, the data is randomly partitioned, where the training set contains 80\% of the samples, and the remaining 20\% constitutes the test set. This process is independently repeated 50 times using different random splits of training and testing sets. Table \ref{tab:misclassification rate} presents the average results of these 50 independent runs on the leukemia dataset using different training sets and test sets. 
\begin{table}[htb]
	\caption{Average misclassification rate on 50 test sets}
	\label{tab:misclassification rate}
	\resizebox{\textwidth}{!}{%
		\begin{tabular}{|l|l|l|l|l|l|l|l|l|}
			\hline
                Methods                & rifle  & SA\_rifle & PGSA\_ML & SA\_PGSA\_ML & IFTRR & DEC    \\ \hline
			Misclassification Rate & 10.9\% & 6.2\%     & 10.2\%   & 8.0\%  & 11.9\% & 6.9\% \\ \hline
		\end{tabular}%
	}
\end{table}
The table shows the average misclassification rates of various algorithms, allowing for a clear comparison of their performance. From this table, SA\_PGSA\_ML and SA\_rifle gain significant improvements from PGSA\_ML and rifle,  respectively. SA\_rifle gives the lowest misclassification rate among all competing algorithms.

\subsection{Sparse Canonical Correlation Analysis}  
Following the sCCA experiment performed in \cite{tan2018sparse}, we aim to generate a low-rank cross covariance matrix $\bm\Sigma_{XY}=\lambda_1 \bm\Sigma_X \mathbf{v}_X \mathbf{v}_Y^{\top} \bm\Sigma_Y$, where $0<\lambda_1<1$ is the largest generalized eigenvalue of the generalized eigenvalue problem being generated and $\mathbf{v}_X$ and $\mathbf{v}_Y$ are the leading pair of canonical directions of the sCCA. The data consists of two $m \times n/2$ matrices $\mathbf{X}$ and $\mathbf{Y}$. Assume that each row of the two matrices is generated according to $([\mathbf{X}]_{\cdot,:}, [\mathbf{Y}]_{\cdot,:})^\top \sim \mathcal{N}(\mathbf{0},\bm\Sigma)$, where $\bm\Sigma$ is the same as the one in sFDA experiment described above. The sample covariance matrices are then constructed based on the data matrix. In our setting, we set $\lambda_1=0.9$ and let $\mathbf{v}_X, \mathbf{v}_Y$ be sparse vectors, each with sparsity 8 and a random support. Then $\mathbf{v}_X, \mathbf{v}_Y$ are normalized as $\mathbf{v}_X^{\top} \bm\Sigma_X \mathbf{v}_X = \mathbf{v}_Y^{\top} \bm\Sigma_Y \mathbf{v}_Y =1$. And $\bm\Sigma_X,\bm\Sigma_Y$ are both chosen to be the same $n/2 \times n/2$ block diagonal covariance matrix with five blocks, each of dimension $n/10 \times n/10$. The $(j,j')$-th element of each block takes value $0.8^{|j-j'|}$. The measure used here is the covariance between the canonical variables, with a higher value indicating better performance. The sparsity of the experiment starts from $s=16$ since the sparsity of the ground truth leading eigenvector $\mathbf{x}=(\mathbf{v}_X,\mathbf{v}_Y)^\top$ is 16. Figure \ref{fig:sCCA} summarizes the results for the case where $m=200$, $n=1000$, with these results averaged over 100 distinct data sets.
\begin{figure}[h]
	\centering
	\includegraphics[scale=0.43,trim=90 0 90 0]{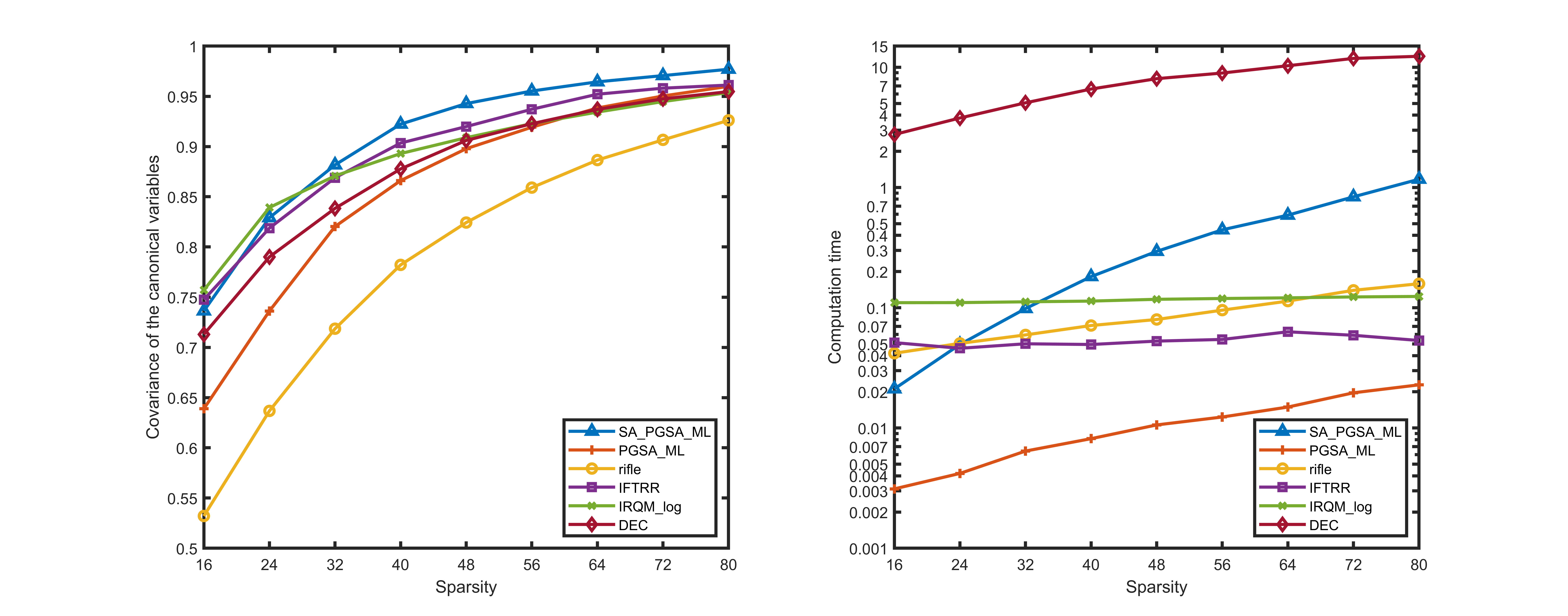}
	\caption{Covariance of the canonical variables vs sparsity (left), Computation time vs 
 sparsity (right)}
    \label{fig:sCCA}
\end{figure}

In terms of solution quality, SA\_PGSA\_ML obtains the highest covariance among all competing algorithms, with exceptions at sparsity levels $s=16$ and $s=24$. For these cases, although SA\_PGSA\_ML yields a lower covariance than IRQM\_log and IFTRR, it requires less computation time. When compared to PGSA\_ML — the gradient-based algorithm used in the first stage in SA\_PGSA\_ML — SA\_PGSA\_ML delivers a marked improvement across all sparsity levels, albeit at 7 to 50 times the computational cost. Furthermore, SA\_PGSA\_ML consistently outperforms DEC, by achieving superior covariance across all sparsity settings and requiring significantly less computation time.

\subsection{Sparse Sliced Inverse Regression}
We employ sparse sliced inverse regression on three real data sets including Prostate\_GE\cite{li2018feature}, RELATHE\cite{Web_qwone,Lang95}, and BASEHOCK\cite{Web_qwone,Lang95}. The comparisons of the objective value of the Rayleigh quotient for each algorithm on those data sets are shown in Figure~\ref{fig:Three-DataSets}.

\begin{figure}[htb]
 \centering
 \begin{tabular}{cc}
 \includegraphics[width=2.35in]{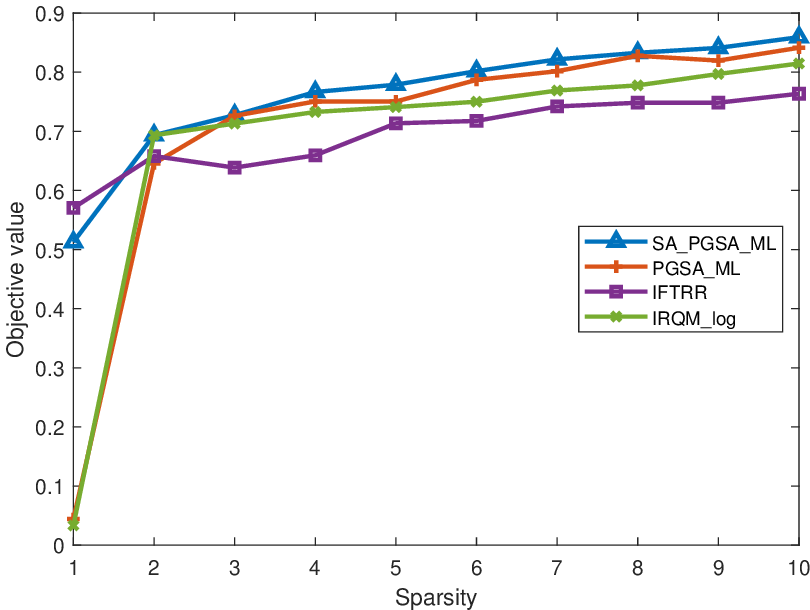}&
 \includegraphics[width=2.35in]{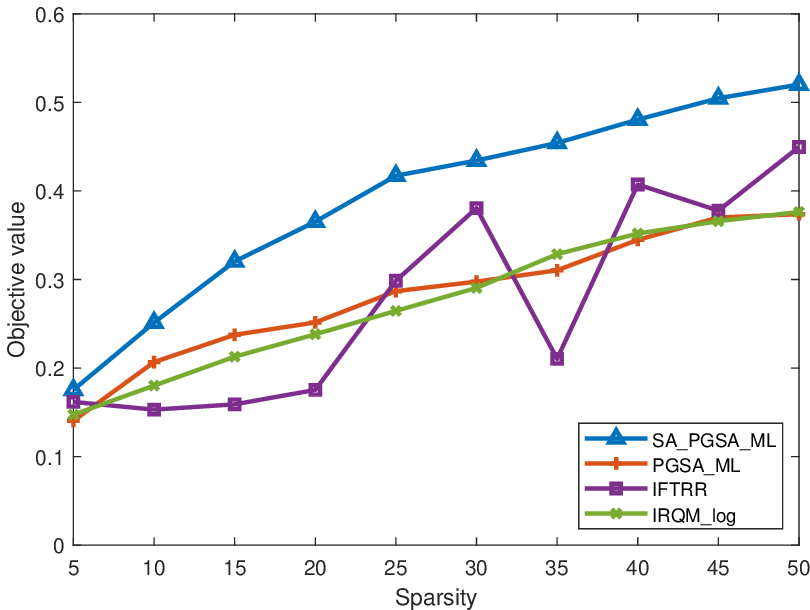}\\
 (a) the "Prostate\_GE" data set &(b) the "RELATHE" data set
 \end{tabular}
 \begin{tabular}{c}
 \includegraphics[width=2.35in]{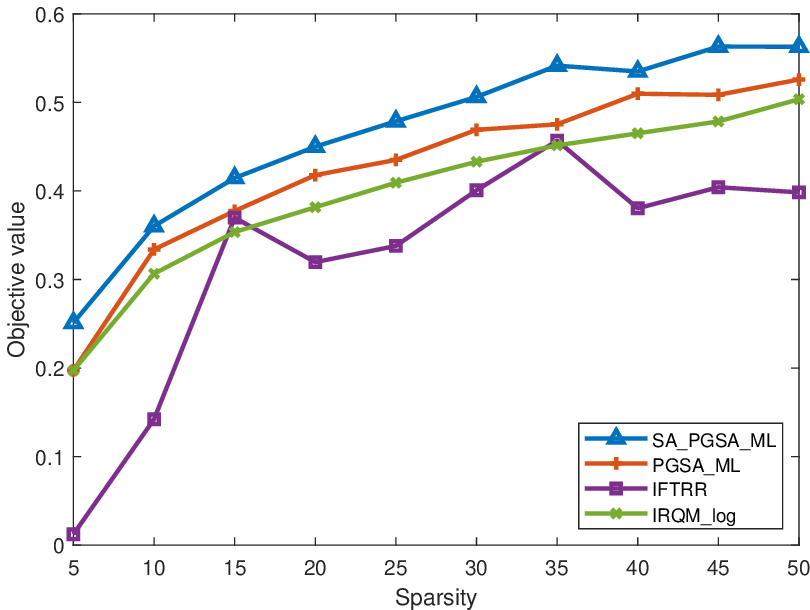}\\
 (c) the "BASEHOCK" data set
 \end{tabular}
 \caption{Objective value vs sparsity for (a) the "Prostate\_GE" data set; (b) the "RELATHE" data set; and (c) the "BASEHOCK" data set.}
 \label{fig:Three-DataSets}
 \end{figure}

Our algorithm consistently outperforms competing algorithms, with the exception of the $s=1$ case for the Prostate\_GE data set. It also significantly outperforms competing algorithms on RELATHE data set.

\section{Conclusions}\label{sec:conclusions}
In this work, we have addressed the computational challenges associated with the Sparse Generalized Eigenvalue Problem (sGEP), a cornerstone of several statistical learning methods. Recognizing the pitfalls of conventional approaches that are prone to local optima and sensitive initial conditions, we introduced the innovative \textit{support alteration} technique. This not only aids gradient-based iterations to overcome local optima but also provides a potential boost in the objective value. Harnessing this concept, we put forth a successive two-stage algorithm for sGEP. The algorithm alternates between using a gradient-based method to capture a stationary point and refining it through the \textit{support alteration} procedure. This iterative approach persists until specific stopping criteria are met. Our empirical evaluations underscore the superiority of our method over existing competitors, signaling its promise in the realms of statistical learning and optimization.

\section*{Acknowledgment of Support}
The work of Q. Li was supported in part by the Natural Science Foundation of China under grant 11971499 and the Guangdong Province Key Laboratory of Computational Science at the Sun Yat-sen University (2020B1212060032). 
The work of L. Shen was supported in part by the National Science Foundation under grant DMS-1913039 and DMS-2208385. 
The work of N. Zhang was supported in part by the National Science Foundation of China grant number 12271181, by the Guangzhou Basic Research Program grant number 202201010426 and by the Guangdong Basic and Applied Basic Research Foundation grant number 2023A1515030046.


\bibliography{allbib}

\begin{thebibliography}{10}

\bibitem{Web_qwone}
{\em 20 newsgroups}, 2008, \url{http://qwone.com/~jason/20Newsgroups/}
  (accessed July 1, 2021).

\bibitem{beck2013sparsity}
{\sc A.~Beck and Y.~C. Eldar}, {\em Sparsity constrained nonlinear
  optimization: Optimality conditions and algorithms}, SIAM Journal on
  Optimization, 23 (2013), pp.~1480--1509.

\bibitem{beck2016sparse}
{\sc A.~Beck and Y.~Vaisbourd}, {\em The sparse principal component analysis
  problem: Optimality conditions and algorithms}, Journal of Optimization
  Theory and Applications, 170 (2016), pp.~119--143.

\bibitem{berk2019certifiably}
{\sc L.~Berk and D.~Bertsimas}, {\em Certifiably optimal sparse principal
  component analysis}, Mathematical Programming Computation, 11 (2019),
  pp.~381--420.

\bibitem{bresler2018sparse}
{\sc G.~Bresler, S.~M. Park, and M.~Persu}, {\em Sparse pca from sparse linear
  regression}, Advances in Neural Information Processing Systems, 31 (2018).

\bibitem{cadima1995loading}
{\sc J.~Cadima and I.~T. Jolliffe}, {\em Loading and correlations in the
  interpretation of principle compenents}, Journal of Applied Statistics, 22
  (1995), pp.~203--214.

\bibitem{cai2020inverse}
{\sc Y.~Cai and P.~Li}, {\em An inverse-free truncated rayleigh-ritz method for
  sparse generalized eigenvalue problem}, in International Conference on
  Artificial Intelligence and Statistics, PMLR, 2020, pp.~3460--3470.

\bibitem{cao1987deflation}
{\sc Z.-h. Cao}, {\em On a deflation method for the symmetric generalized
  eigenvalue problem}, Linear Algebra and its Applications, 92 (1987),
  pp.~187--196.

\bibitem{Clemmensen-Hastie-Witten:Technometrics2011}
{\sc L.~Clemmensen, T.~Hastie, D.~Witten, and B.~Ersb{\o}ll}, {\em Sparse
  discriminant analysis}, Technometrics, 53 (2011), pp.~406--413.

\bibitem{d2008optimal}
{\sc A.~d'Aspremont, F.~Bach, and L.~E. Ghaoui}, {\em Optimal solutions for
  sparse principal component analysis}, Journal of Machine Learning Research, 9
  (2008), pp.~1269--1294.

\bibitem{d2005direct}
{\sc A.~d'Aspremont, L.~E. Ghaoui, M.~I. Jordan, and G.~R. Lanckriet}, {\em A
  direct formulation for sparse pca using semidefinite programming}, in
  Advances in neural information processing systems, 2005, pp.~41--48.

\bibitem{davies2001analysis}
{\sc P.~I. Davies, N.~J. Higham, and F.~Tisseur}, {\em Analysis of the cholesky
  method with iterative refinement for solving the symmetric definite
  generalized eigenproblem}, SIAM Journal on Matrix Analysis and Applications,
  23 (2001), pp.~472--493.

\bibitem{feng2019supervised}
{\sc C.-M. Feng, Y.~Xu, J.-X. Liu, Y.-L. Gao, and C.-H. Zheng}, {\em Supervised
  discriminative sparse pca for com-characteristic gene selection and tumor
  classification on multiview biological data}, IEEE transactions on neural
  networks and learning systems, 30 (2019), pp.~2926--2937.

\bibitem{frank1956algorithm}
{\sc M.~Frank, P.~Wolfe, et~al.}, {\em An algorithm for quadratic programming},
  Naval research logistics quarterly, 3 (1956), pp.~95--110.

\bibitem{ge2018minimax}
{\sc J.~Ge, Z.~Wang, M.~Wang, and H.~Liu}, {\em Minimax-optimal
  privacy-preserving sparse pca in distributed systems}, in International
  Conference on Artificial Intelligence and Statistics, 2018, pp.~1589--1598.

\bibitem{jeffers1967two}
{\sc J.~Jeffers}, {\em Two case studies in the application of principal
  component analysis}, Journal of the Royal Statistical Society: Series C
  (Applied Statistics), 16 (1967), pp.~225--236.

\bibitem{jolliffe2003modified}
{\sc I.~T. Jolliffe, N.~T. Trendafilov, and M.~Uddin}, {\em A modified
  principal component technique based on the lasso}, Journal of computational
  and Graphical Statistics, 12 (2003), pp.~531--547.

\bibitem{journee2010generalized}
{\sc M.~Journ{\'e}e, Y.~Nesterov, P.~Richt{\'a}rik, and R.~Sepulchre}, {\em
  Generalized power method for sparse principal component analysis}, Journal of
  Machine Learning Research, 11 (2010), pp.~517--553.

\bibitem{kalantzis2020domain}
{\sc V.~Kalantzis}, {\em A domain decomposition rayleigh--ritz algorithm for
  symmetric generalized eigenvalue problems}, SIAM Journal on Scientific
  Computing, 42 (2020), pp.~C410--C435.

\bibitem{kuleshov2013fast}
{\sc V.~Kuleshov}, {\em Fast algorithms for sparse principal component analysis
  based on rayleigh quotient iteration}, in International Conference on Machine
  Learning, 2013, pp.~1418--1425.

\bibitem{lang2019efficient}
{\sc B.~Lang}, {\em Efficient reduction of banded hermitian positive definite
  generalized eigenvalue problems to banded standard eigenvalue problems}, SIAM
  Journal on Scientific Computing, 41 (2019), pp.~C52--C72.

\bibitem{Lang95}
{\sc K.~Lang}, {\em Newsweeder: Learning to filter netnews}, in Proceedings of
  the Twelfth International Conference on Machine Learning, 1995, pp.~331--339.

\bibitem{lee2007two}
{\sc S.~H. Lee and S.~Choi}, {\em Two-dimensional canonical correlation
  analysis}, IEEE Signal Processing Letters, 14 (2007), pp.~735--738.

\bibitem{li2018feature}
{\sc J.~Li, K.~Cheng, S.~Wang, F.~Morstatter, R.~P. Trevino, J.~Tang, and
  H.~Liu}, {\em Feature selection: A data perspective}, ACM Computing Surveys
  (CSUR), 50 (2018), p.~94.

\bibitem{li1991sliced}
{\sc K.-C. Li}, {\em Sliced inverse regression for dimension reduction},
  Journal of the American Statistical Association, 86 (1991), pp.~316--327.

\bibitem{li2006sparse}
{\sc L.~Li and C.~J. Nachtsheim}, {\em Sparse sliced inverse regression},
  Technometrics, 48 (2006), pp.~503--510.

\bibitem{luss2013conditional}
{\sc R.~Luss and M.~Teboulle}, {\em Conditional gradient algorithmsfor rank-one
  matrix approximations with a sparsity constraint}, siam REVIEW, 55 (2013),
  pp.~65--98.

\bibitem{meerbergen2015sylvester}
{\sc K.~Meerbergen and B.~Plestenjak}, {\em A sylvester--arnoldi type method
  for the generalized eigenvalue problem with two-by-two operator
  determinants}, Numerical Linear Algebra with Applications, 22 (2015),
  pp.~1131--1146.

\bibitem{moghaddam2005spectral}
{\sc B.~Moghaddam, Y.~Weiss, and S.~Avidan}, {\em Spectral bounds for sparse
  pca: Exact and greedy algorithms}, Advances in neural information processing
  systems, 18 (2005).

\bibitem{parlett1998symmetric}
{\sc B.~N. Parlett}, {\em The symmetric eigenvalue problem}, SIAM, 1998.

\bibitem{shen2008sparse}
{\sc H.~Shen and J.~Z. Huang}, {\em Sparse principal component analysis via
  regularized low rank matrix approximation}, Journal of multivariate analysis,
  99 (2008), pp.~1015--1034.

\bibitem{shi2020supervised}
{\sc Z.~Shi, D.~Wu, Y.-K. Wang, and C.-T. Lin}, {\em Supervised discriminative
  sparse pca with adaptive neighbors for dimensionality reduction}, arXiv
  preprint arXiv:2001.03103,  (2020).

\bibitem{sigg2008expectation}
{\sc C.~D. Sigg and J.~M. Buhmann}, {\em Expectation-maximization for sparse
  and non-negative pca}, in Proceedings of the 25th international conference on
  Machine learning, ACM, 2008, pp.~960--967.

\bibitem{song2015sparse}
{\sc J.~Song, P.~Babu, and D.~P. Palomar}, {\em Sparse generalized eigenvalue
  problem via smooth optimization}, IEEE Transactions on Signal Processing, 63
  (2015), pp.~1627--1642.

\bibitem{sriperumbudur2011majorization}
{\sc B.~K. Sriperumbudur, D.~A. Torres, and G.~R. Lanckriet}, {\em A
  majorization-minimization approach to the sparse generalized eigenvalue
  problem}, Machine learning, 85 (2011), pp.~3--39.

\bibitem{tan2018sparse}
{\sc K.~M. Tan, Z.~Wang, H.~Liu, and T.~Zhang}, {\em Sparse generalized
  eigenvalue problem: Optimal statistical rates via truncated rayleigh flow},
  Journal of the Royal Statistical Society: Series B (Statistical Methodology),
  80 (2018), pp.~1057--1086.

\bibitem{tan2019learning}
{\sc M.~Tan, Z.~Hu, Y.~Yan, J.~Cao, D.~Gong, and Q.~Wu}, {\em Learning sparse
  pca with stabilized admm method on stiefel manifold}, IEEE Transactions on
  Knowledge and Data Engineering, 33 (2019), pp.~1078--1088.

\bibitem{witten2009penalized}
{\sc D.~M. Witten, R.~Tibshirani, and T.~Hastie}, {\em A penalized matrix
  decomposition, with applications to sparse principal components and canonical
  correlation analysis}, Biostatistics, 10 (2009), pp.~515--534.

\bibitem{yang2015streaming}
{\sc W.~Yang and H.~Xu}, {\em Streaming sparse principal component analysis},
  in International Conference on Machine Learning, 2015, pp.~494--503.

\bibitem{yuan2019decomposition}
{\sc G.~Yuan, L.~Shen, and W.-S. Zheng}, {\em A decomposition algorithm for the
  sparse generalized eigenvalue problem}, in Proceedings of the IEEE Conference
  on Computer Vision and Pattern Recognition, 2019, pp.~6113--6122.

\bibitem{yuan2013truncated}
{\sc X.-T. Yuan and T.~Zhang}, {\em Truncated power method for sparse
  eigenvalue problems}, Journal of Machine Learning Research, 14 (2013),
  pp.~899--925.

\bibitem{zhang2022first}
{\sc N.~Zhang and Q.~Li}, {\em First-order algorithms for a class of fractional
  optimization problems}, SIAM Journal on Optimization, 32 (2022),
  pp.~100--129.

\bibitem{zhang2012sparse}
{\sc Y.~Zhang, A.~d'Aspremont, and L.~El~Ghaoui}, {\em Sparse pca: Convex
  relaxations, algorithms and applications}, in Handbook on Semidefinite, Conic
  and Polynomial Optimization, Springer, 2012, pp.~915--940.

\bibitem{zou2006sparse}
{\sc H.~Zou, T.~Hastie, and R.~Tibshirani}, {\em Sparse principal component
  analysis}, Journal of computational and graphical statistics, 15 (2006),
  pp.~265--286.

\end{thebibliography}
\bibliographystyle{siamplain}
\end{document}